\newtheorem{theorem}{Theorem}[section]
\newtheorem{lemma}[theorem]{Lemma}
\newtheorem{assumption}{Assumption}
\newtheorem{remark}{Remark}
\newcommand{\R}{\mathbb{R}}
\newcommand{\M}{\mathbb{M}}
\newcommand{\diff}{\mathrm{d}}
\newcommand{\N}{\mathbb{N}}
\newcommand{\MLS}{\mathcal{M}}
\newcommand{\MLAD}{\mathcal{T}}  
\newcommand{\Cret}{C_r}
\newcommand{\CRK}{C_{\mathrm{RK}}}
\newcommand{\lf}{\left}
\newcommand{\rt}{\right}
\newcommand{\weightkernel}{\omega}
\newcommand{\vv}{\mathbf{v}}
\newcommand{\euc}{\mathfrak{E}}
\newcommand{\Manoa}{M\=anoa}
\newcommand{\Hawaii}{Hawai`i }
\newcommand{\crcom}[1]{#1}
\newcommand{\gwcom}[1]{#1}
\newcommand{\thcom}[1]{#1}
\title[A Semi-Lagrangian scheme on embedded manifolds]{A Semi-Lagrangian scheme on embedded manifolds using generalized local polynomial reproductions}
\author{Thomas Hangelbroek} 
\address{Department of Mathematics, University of \Hawaii– \Manoa, 2565 McCarthy Mall,
Honolulu, HI 96822, USA}
\email{hangelbr@math.hawaii.edu}
\author{Christian Rieger} 
\address{Philipps-Universit\"at Marburg, Department of Mathematics and Computer Science,
Hans-Meerwein-Stra\ss{}e 6, 35032 Marburg, 
Germany}
\email{riegerc@mathematik.uni-marburg.de}
\author{Grady B. Wright} \address{Boise State University, 1910 University Drive, 83725, Boise, Idaho, USA}
\email{gradywright@boisestate.edu}
\thanks{Grady B. Wright  was supported by NSF grant DMS-2309712}
\date{\today}
\begin{document}
\begin{abstract}
We analyze rates of uniform convergence for a class of high-order semi-Lagrangian schemes for first-order, time-dependent partial differential equations on embedded submanifolds of $\R^d$ (including advection equations on surfaces) by extending the error analysis of Falcone and Ferretti \cite{FalconeFerretti}. A central requirement in our analysis is a remapping operator that achieves both high approximation orders and strong stability, a combination that is challenging to obtain and of independent interest. For this task, we propose a novel mesh-free remapping operator based on $\ell_1$ minimizing generalized polynomial reproduction, which uses only point values and requires no additional geometric information from the manifold (such as access to tangent spaces or curvature). Our framework also rigorously addresses the numerical solution of ordinary differential equations on manifolds via projection methods. We inclue numerical experiments that support the theoretical results and also suggest some new directions for future research.

\smallskip
 \noindent \textbf{Keywords.} Semi-Lagrangian method, first order PDEs on embedded manifolds, stable mesh-free remapping operator, ODEs on manifolds, moving least squares
 
 \smallskip
 \noindent \textbf{AMS subject classifications.} 
    65M12,   	
     65M15,   	
      65M25,   	
      41A25,   	
       	41A63  
\end{abstract}
\maketitle


%
%
%
\section{Introduction}
 In this article,
we treat  first order, time dependent PDEs
\begin{equation}
\label{eq:first_order}
\frac{\partial}{\partial t} v(x,t) = \vec{a}(x,t)\cdot \nabla v(x,t) + b(x,t) v(x,t) + c(x,t)
\qquad v(x,0) = v_0,
\end{equation}
with smooth coefficients and initial conditions
on certain closed, embedded manifolds of $\R^d$
by a  mesh-free {\em semi-Lagrangian} method.
This is a method, described below,
 which creates a discrete array $(\vv_{n,m})_{n\le N, m\le M}$ of values corresponding to samples of the true solution:
$$\vv_{n,m} \sim v(x_n, t_m)$$
where $x_n\in X$ is taken from a fixed  subset $X\subset \Omega$ having $N$ points.
Successive  vectors
$\vv_{\cdot,m}:=(\vv_{n,m})_{n\le N}$
 are based on solutions $\vv_{\cdot,m-1}$ at the previous time step by evolving the solution
along characteristic curves.  

Semi-Lagrangian (SL) methods have a long history in treating transport problems in various fields~\cite{FalconeFerrettiBook}, including numerical weather prediction~\cite{staniforth1991SL}, material science~\cite{nematic}, ecology~\cite{Plankton}, biophysics~\cite{farnudi2023dynamics}, and plasmas~\cite{sonnendrucker1999semi,Besse_Vlasov,Besse_HighOrder}.
A main focus in the development of these schemes 
has been to improve on finite-difference methods by providing stability
under more relaxed conditions.
This has been a key feature of SL methods especially in the regime of large Courant-Friedrichs-Lewy (CFL) numbers,
roughly $h_t/h_X$,
where
we use spatial resolution and (fixed) time steps
$$h_{X}:= \max_{x\in\Omega}\mathrm{dist}(x,X) \quad \text{and} \quad h_t:= t_m-t_{m-1}.$$ 
Large CFL numbers often arise in problems in numerical weather prediction,
and generally on transport problems on spheres, where it can be challenging to produce spherical grids amenable to high-order methods.
Here usually small mesh-sizes occur at the poles of latitude-longitude grids, or at element boundaries of geodesic grids, which can force the time-steps to be prohibitively small for explicit numerical methods.

The main idea behind SL methods is to use characteristic curves to
update the numerical solution:
 for a set of computational nodes, 
  integrate the ordinary differential equation governing the flow backwards in time for each node. 
  This leads to a new set of (dynamic) node positions, called {\em departure points}.
 
 A spatial approximation scheme is needed to transfer quantities between the computational nodes and the dynamical nodes; this is often called `remapping.'
When dealing with more complex underlying structures like triangulations, where mesh topology plays 
a 
role, remapping between structures can be quite difficult.
In the mesh-free scenario, remapping is substantially simpler.
It requires a semi-discrete approximation scheme 
$$\MLAD_{X}:\R^N\to C(\Omega).$$
which uses sampled values from the fixed set of nodes to construct an approximant
over the entire domain of interest: taking values on $X$ and producing a smooth function on $\Omega$.
This is used to approximate the solution at the 
departure point $(\xi_n,t_{m-1})$ of $(x_n,t_m)$:
$$v(\xi_n,t_{m-1})\sim 
\MLAD_{X}(\vv_{\cdot,{m-1}})(\xi_n).$$
Properties of this remapping operator crucially enter the convergence theory of SL methods, as considered in the seminal work~\cite{FalconeFerretti}.
\crcom{Due to stability properties of the remapping which are only known to be valid for piecewise linear approximation, the convergence analysis of \cite{FalconeFerretti} was limited to an error bound of the form $\mathcal{O}(h_X^2/h_t)$ which also could not benefit from higher smoothness of the solution $v$. } 
We aim at a convergence result of the form
\begin{equation*}
    |\vv_{n,m} - v(x_n,t_m)|\le C
\Bigl(h_t^{\ell}
+\frac{1}{h_t}\Bigl(\frac{h_{X}}{h_t}\Bigr)^k
\Bigr)
e^{(2\lambda+2)t_m}
\end{equation*}
where $h_t \le h_0$,
$h_{X} \lesssim h_t$ and $\ell,k$ denote the approximation orders the time integrator and the remapping operator, see \cref{thm:mainerror} for details.
\crcom{Theoretically this allows for  any convergence order $h^s$ for $s>0$ given smooth enough solutions $v$ and correspondingly high-order time integrators, see explanation after proof of \cref{thm:mainerror}.}

In order to achieve this error bound, we need a remapping operator with two properties: high approximation orders
and strong stability. 
High approximation orders are needed in order to achieve high convergence rates for the SL scheme 
given that the solution is regular enough. There are many well known schemes achieving this property.

However, the second key element is a very restrictive stability property of the scheme. 
This stability is often expressed in some variants of the Lebesgue constant for polynomial interpolation. 
In \cite{FalconeFerretti} the approximation operator is assumed to be interpolatory $\MLAD_{X}\vec{c}(x_n) = \vec{c}_n$ for $x_n\in X$
and weakly contractive:
$\sup_{x\in \Omega} |\MLAD_{X} \vec{c}(x) |\le \|\vec{c}\|_{\ell_{\infty}(X)}.$
Other than piecewise linear
interpolation, there seem to be no examples of such operators.
Thus  the  methods and associated error analyses considered in \cite{FalconeFerretti} are, in practice, limited  to being low (second) 
order in space.
%
%
%
\subsubsection*{Novel remapping operator}
Our approach departs from the rather strict assumptions of \cite{FalconeFerretti} by dropping the interpolation assumption, and by permitting a weaker stability
requirement: $$\sup_{x\in \Omega} |\MLAD_{X} \vec{c}(x) |\le (1+\epsilon) \|\vec{c}\|_{\ell_{\infty}(X)},$$ for user defined $\epsilon>0$.
Precise requirements for $\MLAD_{X}$ are given in \Cref{ass:A_approximator}.
We show that the recent result \cite{HRW_LPR} on local polynomial reproductions
guarantees the existence of such spatial approximators with arbitrarily high approximation orders, 
and thus deliver suitable convergence rates for the semi-Lagrangian 
scheme (comparable to those in \cite{FalconeFerretti}).

In particular, we point out that our remapping operator in the setting of the error bound \eqref{eq:motivation} requires the solution of a linear program with $n_k\sim \theta^{d_{\M}}_k h^{-\frac{d_{\M}}{\sigma}}_{X}$ many degrees of freedom, see \Cref{sec:ell1} for details.

We note that similar mesh-free approaches on manifolds have been attempted recently in \cite{PLR, SW} using  kernel  interpolation to perform the spatial approximation.
Although  kernel interpolation enjoys high orders of convergence, 
it does so without guarantee of stability
(indeed, only in rare instances are Lebesgue constants known to be controlled independent of the problem size \cite{HNW-poly}). 
Although both of these articles show compelling results, neither is able to use the restrictive convergence theory framework of \cite{FalconeFerretti}.
%
%
%
\subsection{Main contributions}
In this article  we focus on  first order advection equations over a class of boundary-free, 
embedded submanifolds of $\R^d$ as SL methods are often applied in surface advection related problems.
Thus, the computational domain $\Omega= \M\subset \R^d$ is a smooth, $d_{\M}$-dimensional closed manifold. 

We propose a novel remapping operator based on a norming set approach and resulting local polynomial reproductions. This approach is closely related to moving least squares (MLS) techniques but exhibits some key differences. 
Our approach considers the restriction of polynomials from the ambient space to the embedded manifold. Algorithmically, we do not even need to know the dimension of the linear space of the restricted polynomials in advance. This is in particular attractive in the case of unknown, 
complicated or moving surfaces.  Moreover, we do not require any approximation of the tangent space of the embedded surface. 
(There is an accompanying MLS method that we consider in our numerical experiments which similarly does not require 
local approximation of the manifold, in contrast to conventional MLS methods as in  \cite{GrossTrask,SoberMLS,YanGMLS}.) 
\thcom{This results in an algorithm enjoying  the aforementioned convergence result
which only requires
 $h_{X} \le C h_t$. In particular, it does not require a CFL condition, which is typically of the form $h_t \le c h_{X}$.
 For sufficiently smooth solutions,
 the condition $h_X \sim h_t^{\sigma}$, with $\sigma> 1$, permits
 high-order convergence in terms of $h_X$.}

Moreover, we present a feasible numerical procedure in order to implement the novel remapping operator and hence also the SL method. Here we also place a focus on explicit Runge-Kutta methods on manifolds using a projection approach.
%
%
%
\subsection{Outline}
In the next section, we discuss the linear advection equation, along with
an error analysis for a SL approach. \Cref{sec:ell1} treats the remapping operator and discusses its practical implementation.
\Cref{sec:ODE} presents a  high precision ODE solver for embedded manifolds.
Solving
systems of ordinary differential equations on manifolds is a topic which has 
received much attention. To some extent, the particularities of the ODE solver 
are not so important
for our main results,
although
achieving high precision and producing smooth solutions
is critical.
 We include this section for the sake of completeness, 
 to demonstrate that hypotheses on our time integrators are rigorously satisfied,
and to describe how our solution was implemented.
We conclude with \Cref{sec:NumericalResults}
which gives numerical results for our algorithm, and further discusses the stability
assumption on the operator $\MLAD_{X}$.
%
%
%
\section{A semi-Lagrangian scheme for advection problems} 
\label{sec:Theory}
Let  $\M\subset \R^d$ be a smooth manifold
with  Riemannian metric inherited from $\R^d$, so that the gradient $\nabla u(x)\in T_x\M$ of a function $u$ at $x$ is defined
in the usual way: 
for every tangent vector $\vec{a}\in T_x\M$,  $\vec{a}\cdot \nabla u(x) =  du(x)\, \vec{a} $.
Equivalently, if $\tilde{u}:\R^d\to \R$ is a $C^1$  extension of $u$ (so that $\tilde{u}|_{\M} = u$), then for $x\in\M$,
$\nabla u (x)= P \nabla_{\R^d} \tilde{u}(x)$, where $P: T_x\R^d\to T_x\M$ is the orthogonal projector
of $ T_x\R^d = T_x\M\oplus N_x \M$
onto $T_x\M$.
%
%
%
\subsection{Solution via characteristics}
We consider the PDE \cref{eq:first_order}, with $\vec{a}:\M\times \R\to T\M$ a $C^k$ vector field,
and scalar functions $b,c,v_0\in C^k(\M)$.
Solutions can be found via the method of  characteristics:
for  $t_0\in \R$ and $x_0\in\M$,
 the  (reverse time) characteristic   $y:(-\epsilon,\epsilon)\to \M$  solves
\begin{equation}
\dot{y}(s) = \vec{a}(y(s),t_0-s),\qquad y(0) =x_0. \label{eq:ODE}
\end{equation}
For any function $v:\M\times (t_0-\epsilon,t_0+\epsilon) \to \R$,
the   function 
${v}_{\flat}:s\mapsto v(y(s),t_0-s)$ 
satisfies
$\dot{v}_{\flat}(s) 
= \vec{a}(y(s) ,t_0-s)\cdot \nabla v(z(s),t_0-s) - v_t(y(s),t_0-s).
$
Thus $v$ is a solution to \cref{eq:first_order} if and only if ${v}_{\flat}$ 
satisfies
$\dot{v}_{\flat}(s)
=  -b(y(s),t_0-s){v}_{\flat}(s)- c(y(s),t_0-s)$
for any $x_0,t_0$. 
We solve for $v$ by  computing the integrals
\begin{align} 
\mu_{x_0,t_0}(s) &:=\exp\bigl(\int_0^s b(y (\tau), t_0-\tau)\diff \tau),
\label{eq:mu}\\ 
\nu_{x_0,t_0}(s) &:=\int_0^{s} \mu(\tau) c(y(\tau),t_0-\tau)\diff \tau.
\label{eq:nu}
\end{align}
This provides the solution
\begin{equation}
v(x_0,t_0)  =\mu_{x_0,t_0}(s) v(y(s),t_0-s)  +\nu_{x_0,t_0}(s) .
\label{eq:Soln}
\end{equation}
The  {\em evolution map} $F$, as described in \cite[Chapter 4]{marsden}, is defined for $s,t_0\in\R$ and $x_0\in \M$
as $F_{s,t_0}(x_0) = y(s)$. Thus $F_{t_0,t_0}(x_0)=x_0$ and 
$\frac{\partial }{\partial s} F_{s,t_0}(x_0) 
=  \vec{a}(F_{s,t_0}(x_0),s).$
In addition $F\in C^k$  when $\vec{a}\in C^k$.
This fact, in conjunction with \cref{eq:Soln},
 guarantees
that $v$ is smooth provided $\vec{a}$, $b$, $c$ and $v_0$ are smooth, see also \cite[Appendix B]{Duistermaat2000}. 

A further fact is that the growth of $\mu$, $\nu$ and $v$  can be controlled by using
constants  $\|v_0\|_{\infty}$, 
 $\overline{\lambda} = \sup b(x,t)$, and $\|c\|_{\infty}$.
For any $0<s\le t_0$, we have $|\mu_{x_0,t_0}(s)|\le e^{\overline{\lambda} s}$ 
and thus $|\nu_{x_0,t_0}(s)|\le  \|c\|_{\infty}  s e^{\overline{\lambda} s}$.
Consequently, 
$$|v(x_0,t_0)| \le (\|v_0\|_{\infty}+t_0\|c\|_{\infty})e^{\overline{\lambda} t_0}.$$
%
%
%
\subsection{Algorithm}
To compute the solution to \cref{eq:first_order} numerically, one needs to solve the ODE \cref{eq:ODE},
while the integrals
for $\mu(s;x_0,t_0)$ and 
$\nu(s;x_0,t_0)$
can be approximated by quadrature,
using approximate values sampled from the characteristic.

We'll assume there are numerical integrators for \cref{eq:ODE}. At first, consider a one-step scheme
\thcom{$$
y(h_t)
\sim
\tilde{y}(h_t):=
\Phi_{(x_0,t_0)}(h_t)
$$
satisfying
 $\lim_{h_t\to 0}\Phi_{(x_0,t_0)}(h_t) -x-h_t\vec{a}(x,t-h)=0$,
although for our error analysis, we'll assume (the  stronger)  
 \cref{ass:A_integrator}, given below.
We assume the integrals in \cref{eq:mu} and \cref{eq:nu} can be computed by quadrature:
$Q^{(1)}_{x_0,t_0}(s) \sim \mu_{x_0,t_0}(s)$ and $Q^{(2)}_{x_0,t_0}(s) \sim \nu_{x_0,t_0}(s)$
(doing this with high accuracy, as required by  \cref{ass:A_integrator},  is a relatively straightforward).}

We then have a computed solution at time $t_m$:
\begin{equation*}
v(x,t_m) \sim 
\thcom{Q^{(1)}_{x,t_m}(h_t) \ v\bigl( \Phi_{(x,t_m)}(h_t),t_{m-1}\bigr)+Q^{(2)}_{x,t_m}(h_t).}
\end{equation*} 
By replacing the value
$v(\xi_n,t_{m-1})$  at the  departure point $\xi_n:=\Phi_{(x,t_m)}(h_t)$
with the help of the remapping operator $\MLAD_{X}:\R^{N}\to C(\Omega)$,
this yields a space/time discrete solution 
\begin{equation}
\label{eq:fully_discrete}
\vv_{n,m} = \thcom{ 
Q^{(1)}_{x_n,t_m}(h_t) \Bigl(\MLAD_{X} (\vv_{\cdot,m-1})\Bigr)(\xi_n)
+Q^{(2)}_{x_n,t_m}(h_t)}
\end{equation}
at time $t_m$ and position $x_n \in X$.
%
%
%
\subsection{Key assumptions and main theorem\label{sec:main_theorem}}
In addition to the smoothness assumptions $\vec{a},b,c\in C^k$, we make assumptions on the time integrator and remapping operator described below. These are explored in \Cref{sec:ell1} (for the remapping operator) and \Cref{sec:ODE} (for the time integrator).

We assume the time integrators have consistency order $\ell$ in the following  sense.
\begin{assumption}
\label{ass:A_integrator}
\thcom{For  $\ell\in \N$ there is a  time integrator  $\Phi$ for (\cref{eq:ODE})   with consistency order $\ell$.
That is, there is a constant $C_{\Phi}$ so that  for $(x_0,t_0) \in \M\times(0,\infty)$
and $h_t>0$,
\begin{equation}
\label{eq:One_step_error}
 |\Phi_{(x_0,t_0)}(h_t)
 - y(h_t)|\le C_{\Phi} h_t^{\ell+1}
\end{equation}
holds. }
Furthermore, we assume that the quadrature rules for $\mu$ and $\nu$ are sufficiently
precise that for all $x_0\in \M$ and $t_0\le T$,
\begin{equation}
\label{eq:quadrature}
|Q^{(1)}_{x_0,t_0}(h_t) - \mu_{x_0,t_0}(h_t)| \le  C_{Q} h_t^{\ell+1}
\quad
\text{ and }\quad
|Q^{(2)}_{x_0,t_0}(h_t) - \nu_{x_0,t_0}(h_t)|\le  C_{Q} h_t^{\ell+1}.
\end{equation}
\end{assumption}
\begin{remark}
\label{quad}
\normalfont
Note that \cref{eq:quadrature} follows almost directly once one has a suitably high performing one step method $\Phi$, since
the integrals  involved in \cref{eq:mu} and \cref{eq:nu} are univariate.
Indeed, this can be accomplished with a 
simple quadrature rule over $[0,1]$ (e.g., Newton–Cotes if $\ell$ is not too large, or Gaussian Quadrature).
For weights $w_k$ and nodes $t_k\in [0,1]$ $k=0,\dots M$, suppose $ |\int_0^{1} f(\tau)\diff \tau - \sum_{k=0}^{M} w_k f(t_k) |\le C_{\text{quad}} \max_{z\in[0,1]} |f^{(\ell)}(z)| $. Then for $G\in C^{\ell}(\M)$, \cref{eq:One_step_error} along with a  straightforward scaling argument guarantees that 
\begin{eqnarray*}
\Bigl| \int_0^{h_t} G(y(\tau))\diff \tau - h_t \sum_{k=0}^M w_k G(\tilde{y}(t_k h_t)) \Bigr|
& =& 
h_t \Bigl|\int_0^{1} G(y(\tau h_t))\diff \tau - \sum_{k=0}^M w_k G(\tilde{y}(t_k h_t)) \Bigr|\\
&\le&
C h^{\ell+1} 
\end{eqnarray*}
with $C = C_{\text{quad}} \|G\circ y\|_{C^{\ell}[0,h_t]} +C_{\Phi} \|G\|_{Lip} \left(\sum_{k=0}^M | w_k |  \right) $.
\end{remark}

We assume that $\MLAD_{X}$ satisfies the following approximation and stability conditions.
\begin{assumption}\label{ass:A_approximator}
For every $k\in \N$ there exist constants $\Gamma>0$ and $C_T$ so that 
for every $\epsilon>0$ and any $X\subset \Omega$ with $h_{X} \le \Gamma \epsilon$,
there is an approximation operator $\MLAD_{X}:\R^{N}\to C(\Omega)$ which satisfies the following:
\begin{enumerate} 
\item Stability: 
$$\|\MLAD_{X} \vec{c}\|_{C(\Omega)} \le (1+\epsilon)\|\vec{c}\|_{\ell_{\infty}(X)}$$
\item Approximation: 
$$\|
(\MLAD_{X} \circ S_{X}) f - f\|_{C(\Omega)} 
\le 
C_T \Bigl(\frac{h_{X}}{\epsilon}\Bigr)^{\thcom{k+1}} \|f\|_{C^{\thcom{k+1}}(\Omega)}.$$
Here $S_{X}:C(\Omega)\to \R^{N}$ is the sampling operator: $S_{X} f = f|_{X}$.
\end{enumerate}
\end{assumption}
Based on these assumptions and recalling $\overline{\lambda}= \sup b(x,t)$, we now present error estimates for the SL method.
%
%
\begin{theorem}\label{thm:mainerror}
If \Cref{ass:A_integrator}
and \Cref{ass:A_approximator} hold then
there exists a positive constant $C$ so that if 
$h_t \le h_0$ where $h_0=\min(1/\overline{\lambda}, {C_{Q}}^{-1/\ell})$
and
$h_{X} \le \frac{\Gamma}{3} h_t$, the semi-Lagrangian method satisfies
$$|\vv_{n,m} - v(x_n,t_m)|
\le 
C
\Bigl(h_t^{\ell}
+\frac{1}{h_t}\Bigl(\frac{h_{X}}{h_t}\Bigr)^{\thcom{k+1}}
\Bigr)
e^{(2\lambda+2)t_m}.
$$
\end{theorem}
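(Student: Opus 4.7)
The plan is to prove the bound by induction on the time index $m$, isolating a one-step local error and then iterating a scalar recursion. Define $E_m := \max_{n \le N}|\vv_{n,m} - v(x_n,t_m)|$, note that $E_0 = 0$ since the scheme is initialized by sampling $v_0$, and aim for a recurrence of the form $E_m \le \alpha E_{m-1} + \beta$ with $\alpha = 1 + O(h_t)$ and $\beta = O(h_t^{\ell+1}) + O((h_X/h_t)^{k+1})$. Abbreviate the exact departure point $y := F_{h_t,t_m}(x_n)$ and the numerical one $\xi_n := \Phi_{(x_n,t_m)}(h_t)$, and write $\mu := \mu_{x_n,t_m}(h_t)$, $Q^{(1)} := Q^{(1)}_{x_n,t_m}(h_t)$, and similarly for $\nu,Q^{(2)}$. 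The characteristic formula \cref{eq:Soln} gives $v(x_n,t_m) = \mu\, v(y,t_{m-1}) + \nu$, which I would subtract from \cref{eq:fully_discrete} and split via adding and subtracting $\mu\bigl(\MLAD_{X}\vv_{\cdot,m-1}\bigr)(\xi_n)$, $\mu\bigl(\MLAD_{X} S_X v(\cdot,t_{m-1})\bigr)(\xi_n)$, and $\mu\, v(\xi_n,t_{m-1})$.

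This yields five contributions:
(A) $(Q^{(1)}-\mu)\cdot (\MLAD_X \vv_{\cdot,m-1})(\xi_n)$, controlled by the quadrature bound \cref{eq:quadrature} and the stability bound on $\MLAD_X$ applied to $\|\vv_{\cdot,m-1}\|_{\ell_\infty} \le \|v(\cdot,t_{m-1})\|_\infty + E_{m-1}$;
(B) $\mu\cdot[\MLAD_X(\vv_{\cdot,m-1}-S_X v(\cdot,t_{m-1}))](\xi_n)$, controlled by $|\mu| \le e^{\overline{\lambda}h_t}$ and the stability part of Assumption~\ref{ass:A_approximator} to give $(1+\epsilon)e^{\overline{\lambda}h_t}E_{m-1}$;
(C) $\mu\cdot[\MLAD_X S_X v(\cdot,t_{m-1})-v(\cdot,t_{m-1})](\xi_n)$, which by the approximation part of Assumption~\ref{ass:A_approximator} is at most $e^{\overline{\lambda}h_t} C_T (h_X/\epsilon)^{k+1} \|v(\cdot,t_{m-1})\|_{C^{k+1}}$;
(D) $\mu\cdot[v(\xi_n,t_{m-1})-v(y,t_{m-1})]$, which is bounded by $e^{\overline{\lambda}h_t}\|v\|_{\mathrm{Lip}} C_\Phi h_t^{\ell+1}$ by Assumption~\ref{ass:A_integrator};
(E) $Q^{(2)}-\nu$, bounded by $C_Q h_t^{\ell+1}$ directly.

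The key parameter choice is $\epsilon = h_t$, which is admissible because the hypothesis $h_X \le \Gamma h_t/3$ implies $h_X \le \Gamma \epsilon$ in Assumption~\ref{ass:A_approximator}. Under $h_t \le h_0$, the bounds $\overline{\lambda}h_t \le 1$ and $C_Q h_t^{\ell+1} \le h_t$ turn the multiplier in (A)+(B) into $(1+\epsilon)e^{\overline{\lambda}h_t}(1+Ch_t) \le (1+h_t)(1+c\overline{\lambda}h_t)$, which I would estimate by $1 + (2\overline{\lambda}+2)h_t$ (absorbing the $O(h_t^2)$ term into the factor $2$ built into the exponent of the final bound). The non-homogeneous terms (C)+(D)+(E) together give $\beta \le C\bigl(h_t^{\ell+1} + (h_X/h_t)^{k+1}\bigr)$, using the uniform-in-$m$ bound on $\|v(\cdot,t_{m-1})\|_{C^{k+1}}$ guaranteed by the smoothness of $\vec a,b,c,v_0$ and of the evolution map $F$ on $\M\times[0,T]$ as recorded after \cref{eq:Soln}.

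The recursion $E_m \le \alpha E_{m-1}+\beta$ with $\alpha \le 1+(2\overline{\lambda}+2)h_t$ and $E_0 = 0$ iterates to
\begin{equation*}
E_m \;\le\; \beta\,\frac{\alpha^m - 1}{\alpha - 1} \;\le\; \frac{e^{(2\overline{\lambda}+2)t_m}}{(2\overline{\lambda}+2)h_t}\,\beta,
\end{equation*}
which after substituting $\beta$ produces precisely the stated bound. The main obstacle, and the reason this analysis departs from the interpolatory setting of \cite{FalconeFerretti}, is the handling of term (B): since the stability constant is $(1+\epsilon)$ rather than $1$, one must tie $\epsilon$ to $h_t$ in order to keep the geometric amplification $(1+\epsilon)^m$ bounded by $e^{C t_m}$. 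This coupling is the origin both of the sharp CFL-type condition $h_X \lesssim h_t$ and of the factor $(h_X/h_t)^{k+1}/h_t$ in the final estimate, balancing approximation power against a mild loss from the weaker stability.
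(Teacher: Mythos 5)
Your proposal follows essentially the same route as the paper: subtract the characteristic formula $v(x_n,t_m)=\mu\,v(y,t_{m-1})+\nu$ from the discrete update, telescope into quadrature, stability, approximation, and time-integration errors, and iterate the resulting scalar recursion $E_m\le\alpha E_{m-1}+\beta$ by tying $\epsilon$ to $h_t$ so the amplification stays $e^{O(t_m)}$. One small bookkeeping slip: in the final step you write $\frac{\alpha^m-1}{\alpha-1}\le\frac{e^{(2\overline{\lambda}+2)t_m}}{(2\overline{\lambda}+2)h_t}$, using the \emph{upper} bound $\alpha-1\le(2\overline{\lambda}+2)h_t$ in the denominator, whereas a \emph{lower} bound on $\alpha-1$ is what that step requires; the paper supplies it via $L\ge e^{\overline{\lambda}h_t}+h_t\ge 1+(\overline{\lambda}+1)h_t$, and the fix only changes the constant $C$, not the rate.
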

\begin{proof}
By applying definitions, we  estimate $\mathcal{E}_m := \|\vv_{\cdot,m} - S_{X} v(\cdot,t_m)\|_{\ell_{\infty}(X)}$ 
by
\begin{eqnarray*}
\mathcal{E}_m \le\max_{x_n\in X}
\Bigl| Q^{(1)}_{x_n,t_m}(h_t) \Bigl( \MLAD_{X} \vv_{\cdot,m-1}\Bigr)(\xi_n)
- \mu_{x_n,t_m}(h_t) v\bigl(y(h_t), t_{m-1}\bigr) \Bigr|
+C_Q h_t^{\ell+1}
\end{eqnarray*}
since the part due to $\nu$
 can be controlled 
 simply by using \Cref{ass:A_integrator}.
 By adding and subtracting
$ Q^{(1)}_{x_n,t_m}(h_t) v\bigl(y(h_t), t_m-h_t\bigr) $, 
we have
$$\mathcal{E}_m
\le \max_{x_n\in X}
\Bigl| Q^{(1)}_{x_n,t_m}(h_t)\Bigl[ \Bigl( \MLAD_{X} \vv_{\cdot,m-1}\Bigr)(\xi_n)
-  v\bigl(y(h_t), t_{m-1}\bigr) \Bigr] \Bigr|
+
C h_t^{\ell+1} \max(1, t_m)e^{\overline{\lambda} t_m}
$$
with $C = C_Q(1+\|v_0\|_{\infty}+\|c\|_{\infty})$.
 
To estimate
$|Q^{(1)}_{x_n,t_m}(h_t)| \left[\Bigl(\MLAD_{X} \vv_{\cdot,m-1}\Bigr)(\xi_n) -v\bigl(y(h_t),t_{m-1}\bigr)\right]$,
we note that the bound on the growth of $\mu$ guarantees
$$|Q^{(1)}_{x_n,t_m}(h_t)|\le e^{\overline{\lambda}h_t}+ C_Q h_t^{\ell+1}
\le  e^{\overline{\lambda}h_t}+ h_t
.$$
To estimate the bracketed expression we introduce
$\MLAD_{X} \bigl( S_{X} v(\cdot ,t_{m-1})\bigr)$,
and 
$v(\xi_n, t_{m-1})$, 
obtaining
\begin{eqnarray*}
\left| \Bigl(\MLAD_{X} \vv_{\cdot,m-1}\Bigr)(\xi_n) -v(y(h_t),t_{m-1})\right|
&\le&
\left\| \MLAD_{X} \vv_{\cdot,m-1} - \MLAD_{X} \bigl( S_X v(\cdot ,t_{m-1})\bigr)\right\|_{\infty}\\
&&+ \left\|\MLAD_{X} \bigl( S_X v(\cdot ,t_{m-1}) \bigr)- v(\cdot, t_{m-1}) \right\|_{\infty}\\
&&+|v(\xi_n, t_{m-1}) -v\bigl(y(h_t),t_{m-1}\bigr)|.
\end{eqnarray*}
The first line can be estimated by the stability of $\MLAD_{X}$: 
$$\left\| \MLAD_{X} \vv_{\cdot,m-1} - \MLAD_{X} \bigl( S_X v(\cdot,t_{m-1})\bigr)\right\|_{\infty} 
\le
(1+\epsilon)\mathcal{E}_{m-1}.$$
The second line can be estimated with the approximation property:
$$\left\|\MLAD_{X} \bigl( S_X v(\cdot ,t_{m-1}) \bigr)- v(\cdot, t_{m-1}) \right\|_{\infty}
\le 
C_T\Bigl(h_X/\epsilon\Bigr)^{\thcom{k+1}} \|v(\cdot,t_{m-1})\|_{C^{\thcom{k+1}}(\Omega)}.
$$
The third line is simply the Lipschitz property of $v$ and the  error due to the time integrator:
$$|v(\xi_n, t_{m-1}) -v(y(h_t),t_{m-1})| \le \|v\|_{Lip} |\xi_n - y(h_t)| \le 
 C_{\Phi} \|v\|_{Lip}  h_t^{\ell+1}.$$
Thus  the recursive estimate
$\mathcal{E}_{m} 
\le 
C (h_t^{\ell+1}   +({h_{X}}/{\epsilon})^{\thcom{k+1}} )+ 
(e^{\overline{\lambda }h_t}+h_t) (1+\epsilon) \mathcal{E}_{m-1}
$ 
holds
after combining constants, namely by letting $C :=  C_{T}\thcom{\|v\|_{C^{k+1}(\Omega)}}+ C_{\Phi}\|v\|_{Lip} $.

Define $\rho(h_t,h_{X}) := C \Bigl(h_t^{\ell+1} + \bigl(h_{X}/\epsilon)^{\thcom{k+1}}\Bigr)$
and $L:= (e^{\overline{\lambda} h_t} +h_t)(1+\epsilon) $.
Then
$$\mathcal{E}_{m}\le \rho(h_t,h_{X}) + L\mathcal{E}_{m-1}
\le 
\rho(h_t,h_{X}) (1+L)+ L^2\mathcal{E}_{m-2}
\le \frac{L^m-1}{L-1} \rho(h_t,h_{X}).$$
Since 
 $1+ (\overline{\lambda}+1) h_t\le e^{\overline{\lambda} h_t}+h_t \le 1+ (2\overline{\lambda}+1) h_t$
 and  $\epsilon\le \frac{1}{3} h_t$,
 the quantity
 $L$ satisfies
 $1+(\overline{\lambda}+1) h_t \le L\le 1+ (2\overline{\lambda}+ 2) h_t$,
so 
$$\mathcal{E}_{m}\le 
\frac{\rho(h_t,h_{X})}{(\overline{\lambda}+1) h_t}  \Bigl(1+ \frac{(2\overline{\lambda}+ 2)}{m}t_m\Bigr)^m\le 
\frac{\rho(h_t,h_{X})}{(\overline{\lambda}+1) h_t} e^{(2\overline{\lambda}+2)t_m}$$
and the result follows.
\end{proof}
\crcom{In order to achieve an overall approximation rate $s>0$ for problems with enough smoothness, we can fix an $\sigma>1$ and pick a time integrator such that $\ell=s \sigma$ holds. The time step should be chosen as $h_X \lesssim h_t \sim h^{1/\sigma}_X$. Finally, we choose 
\thcom{$k=\lceil\frac{s\sigma+2-\sigma}{\sigma-1}\rceil=\frac{s\sigma+2-\sigma}{\sigma-1}+r$}
with $r\in [0,1)$. 
These choices yield 
\begin{equation}\label{eq:motivation}
    |\vv_{n,m} - v(x_n,t_m)|\le C
\Bigl(h_X^{\frac{s\sigma}{\sigma}}
+h^{-\frac{1}{\sigma}}_{X}\Bigl(h^{\frac{\sigma-1}{\sigma}}_{X}\Bigr)^{\thcom{\frac{s\sigma+2-\sigma}{\sigma-1}+r+1}}
\Bigr)
e^{(2\lambda+2)t_m}\le 
C h^s_X e^{(2\lambda+2)t_m},
\end{equation}
using that $r\frac{\sigma-1}{\sigma}>0$.
The choice of $k$ determines the degree of polynomial exactness of the remapping operator $\MLAD_{X}$ and hence its numerical costs. 
}
%
%
%
\section{The remapping operator}
\label{sec:ell1}
In this section, we discuss the existence and practical implementation of the remapping operator $\MLAD_{X}$ satisfying
\Cref{ass:A_approximator}.
Our approach is based on local polynomial reproduction (LPR) given in \cite{HRW_LPR}: for a set $\Omega\subset \R^N$
 a polynomial degree $k$ and a point set $X\subset \Omega$, an LPR of degree $k$ 
 is a family of smooth functions $a(x_j,\cdot):\Omega \to \R$ 
 so that 
 \begin{itemize}
 \item $\sup_{z\in\Omega}\sum_{j=1}^N |a(x_j,z)|<\infty$  is bounded,
\item  for each $z\in\Omega$, the support of $a(\cdot,z)$ (as a subset of $X$) is contained in a neighborhood of $z$, and
  \item for all $p\in \mathcal{P}_k(\R^d)$, 
 $\sum_{j=1}^N a(x_j,z) p(x_j) = p(z)$, where $\mathcal{P}_k(\R^d)$
 denotes the space of polynomials of degree $\leq k$.
\end{itemize}
If $\M$ is an algebraic manifold,
\cite[Proposition 6.1]{HRW_LPR} guarantees for any polynomial degree $k$,
that there exist constants $\Gamma_{k}>0$ and $\varTheta_{k}$ so that
if $\epsilon>0$ that if  $X\subset \M$ has $h_{X}< \Gamma_{k} \epsilon$ 
there exists an LPR $a:X\times \M\to \R$ with the property that 
$\sum_{j=1}^N |a(x_j,z)|\le 1+\epsilon$ 
and $\mathrm{supp}\bigl(a(\cdot,z) \bigr)\subset B(z, \varTheta_k\frac{h_X}{ \epsilon})$. 
In fact, this result holds for suitable subsets $\Omega\subset \M$, in which case the
constants $\Omega_k$ and $\varTheta_k$ depend on cone parameters of $\Omega$ --
by considering unions of Lipschitz regions with similar constants $\Gamma_k,\Theta_k$,
 this result holds on certain piecewise algebraic manifolds.
 In short, although it is unclear if the result extends to arbitrary embedded Riemannian manifolds,
 it does hold on a robust class of subsets (which includes, but is much larger than, compact algebraic
 manifolds).
\subsubsection*{Moving least squares}
A  practical application of the LPR to produce a remapping operator which is easily computed, although not aligned with the theory of \Cref{sec:Theory},
is the moving least squares operator:
\begin{align}
\label{eq:Pol_MLS}
 \mathcal{M}_{X}f(z) = 
    p_z^*(z),\text{ where } p_z^*:=\arg\min_{p \in \mathcal{P}_k} 
    \sum_{j=1}^N \left(p(x_j) -f(x_j) \right)^{2} \weightkernel(x_j,z).
\end{align}
as considered in \cite[Sections 5, 6.2]{HRW_LPR}. Here $\weightkernel$ is a weight, which normally takes the form $\weightkernel(x,z)  = \phi(\frac{x-z}{h_X})$
for some radially symmetric function $\phi$ which is  supported  in a ball $B(0,r_1)$ and  bounded away from zero on a ball $B(0,r_2)\subset B(0,r_1)$.
It is well known that $ \mathcal{M}_{X}$ has the form 
$ \mathcal{M}_{X}f = \sum f(x_j) b(x_j,\cdot)$ 
for an LPR $b:X\times \M\to \R$ of degree $k$.

If $X$ is {\em quasi-uniform}, meaning that $q_X =\min |x_j-x_k|$ is on par with $h_X$, 
so $h_X/q_X$ is controlled by some fixed constant $\rho$,
then $\sum |b(x_j,z)| \le C \rho^{\frac{d_{\M}}{2}}$ by \cite[Proposition 5.1]{HRW_LPR}.
Furthermore, by \cite[Proposition 6.3]{HRW_LPR},
we have 
$$|f(z) -  \mathcal{M}_{X}f(z)| \le C h^{\thcom{k+1}} \|f\|_{C^{\thcom{k+1}}(\M)}.$$
Thus $ \mathcal{M}_{X}$ satisfies item 2 (with $\epsilon \sim 1$)
 but not item 1 of \Cref{ass:A_approximator}; it provides high-order approximation rates, but does not appear
 to be weakly contractive.
 Nevertheless, this is a reasonable candidate for a remapping operator and we use it for comparison purposes in the numerical experiments of \Cref{sec:NumericalResults}.
%
%
%
\subsection{Remapping by $\ell_1$minimization\label{sec:ell1_remap}}
We can calculate a piecewise smooth, globally Lipschitz local polynomial reproduction by
selecting for every $z\in \Omega$, a vector $a^*:X\times \Omega\to \R$, which is
constructed as follows: 
for
$x_j \notin B(z, \varTheta_k\frac{h_X}{\epsilon} )$ we set $a^*(x_j,z)=0$, while 
in $B(z, \varTheta_k\frac{h_X}{\epsilon} )$, we have
 \begin{equation}\label{eq:shape_function}
 a^*(\cdot,z)\left|_{X\cap B(z, \varTheta_k\frac{h_X}{\epsilon}  )}\right. :=
 \arg\min \sum_{x_j \in X \cap B(z, \varTheta_k\frac{h_X}{\epsilon}  )} |a_{j}|,\quad\text{ subject to } V_z a= p_z 
 \end{equation}
 where $V_z$ is the Vandermonde matrix for a basis $(p_1,\dots,p_M)$ of $\mathcal{P}_k$ using local center set 
 $X\cap B(z, \varTheta_k \frac{h_X}{\epsilon})$,
 and $p_z$ is the vector $\bigl(p_j(z)\bigr)_{j\le M}$.
 We then set 
\begin{equation}\label{eq:approximator}\MLAD_{X} \vec{c}(z) := \sum_{j=1}^N a^*(x_j,z) c(x_j).
 \end{equation}
 Note that, in order to use this in the 
 semi-Lagrangian scheme, we need to 
 evaluate it at the departure points 
 $\xi_n=x_n +h_t\Phi_{\vec{f}}(x_n,h_t)$, for each
 $x_n\in X$.
 
We point out that \cref{eq:shape_function} formally resembles the classical \emph{basis pursuit} method. To this end, we consider the polynomials $\mathcal{P}_k$ as dictionary of functions. The Vandermonde matrix $V_z$ is the measurement matrix and $p_z$ are the measurements for $z$ arbitrary but fixed.
 This means that we assume to have data $p_z=(p_1(z),\dots,p_{M}(z))^{\top}=(f(1),\dots,f(M))^{\top}$.
 As $\#\bigl(X\cap B(z, \varTheta_k \frac{h_X}{\epsilon}  ) \bigr) \gg M$, we have potentially several vectors $a(\cdot,z) \in \R^{n}$ such that
 $f(m) =p_{m}(z)= \sum_{\xi } a(\xi,z) p_{m}(\xi) $ holds for all $1\le m \le M$. 
 
 In this case the index is the variable in the basis pursuit setting, which asks for the $\ell_1$-norm minimal solution vector. 
The solution $a^{*}(\cdot,z)$ is then extended by zero to all of $X$. This indicates the sparsity of the solution vector.
 \begin{lemma}
 \label{lem:lemma_assumption2}
 If $\Omega$ is a  compact region  
 of an algebraic
 manifold $\M$
 which satisfies a cone condition, 
 the approximator $\MLAD_{X}$ given in \cref{eq:approximator} satisfies \Cref{ass:A_approximator}.
 \end{lemma}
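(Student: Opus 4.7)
The plan is to verify both parts of \Cref{ass:A_approximator} by combining the $\ell_1$-optimality of $a^*(\cdot,z)$ with the polynomial reproduction constraint $V_z a^*(\cdot,z) = p_z$. Both properties will follow quickly because the LPR constructed in \cite[Proposition 6.1]{HRW_LPR} is already a feasible point for the optimization problem \eqref{eq:shape_function}, so $a^*$ inherits its good stability bound, while the error analysis amounts to a standard Taylor estimate on the localized support.

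For item (1), fix $k \in \N$ and take $\Gamma := \Gamma_k$ from \cite[Proposition 6.1]{HRW_LPR}. Under the hypothesis $h_X \le \Gamma\epsilon$, that result yields a family $\tilde{a}(\cdot,z)$ supported in $X\cap B(z,\Theta_k h_X/\epsilon)$ satisfying $V_z \tilde{a}(\cdot,z) = p_z$ and $\|\tilde{a}(\cdot,z)\|_{\ell_1(X)} \le 1+\epsilon$. Since $a^*(\cdot,z)$ is an $\ell_1$-minimizer over exactly this feasibility set, we obtain $\|a^*(\cdot,z)\|_{\ell_1(X)} \le 1+\epsilon$, and hence
\begin{equation*}
|\MLAD_{X} \vec{c}(z)| \le \sum_j |a^*(x_j,z)|\,|c_j| \le (1+\epsilon)\|\vec{c}\|_{\ell_\infty(X)}.
\end{equation*}
Taking the supremum over $z \in \Omega$ yields the stability estimate.

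For item (2), the constraint $V_z a^*(\cdot,z)=p_z$ is precisely polynomial reproduction: $\sum_j a^*(x_j,z)\,q(x_j)=q(z)$ for every $q\in\mathcal{P}_k(\R^d)$. Given $f\in C^{k+1}(\Omega)$, extend it to $\tilde{f}\in C^{k+1}(\R^d)$ with comparable norm via a Whitney-type extension (legitimate since $\Omega$ is a cone-condition region of an algebraic manifold), and let $T_z\in\mathcal{P}_k(\R^d)$ be the degree-$k$ Taylor polynomial of $\tilde{f}$ at $z$, so $T_z(z)=f(z)$. Reproduction of $T_z$ gives
\begin{equation*}
f(z)-(\MLAD_{X} S_X f)(z) = \sum_{x_j\in X\cap B(z,\Theta_k h_X/\epsilon)} a^*(x_j,z)\bigl(T_z(x_j)-f(x_j)\bigr).
\end{equation*}
The standard Taylor remainder bound yields $|f(x_j)-T_z(x_j)|\le C\|\tilde{f}\|_{C^{k+1}}|x_j-z|^{k+1}\le C'(\Theta_k h_X/\epsilon)^{k+1}\|f\|_{C^{k+1}(\Omega)}$ uniformly on the support. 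Combining this with the $\ell_1$-bound from item (1) produces the required estimate with $C_T := (1+\epsilon)C'\Theta_k^{k+1}$ (bounded independently of $X$ since the admissible $\epsilon$ are bounded above in the applications).

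The main subtlety lies not in the algebra but in the ambient hypotheses: one must confirm that \cite[Proposition 6.1]{HRW_LPR} genuinely applies to the subsets $\Omega\subset\M$ under consideration. The paper has already flagged that the cone-condition and piecewise-algebraic structure of $\Omega$ is exactly what is needed, and on such regions the Whitney extension and Taylor remainder bounds used above are standard. Alternatively, one can avoid the extension step entirely by recognizing $a^*(\cdot,z)$ as an LPR of degree $k$ with $\ell_1$-norm at most $1+\epsilon$ and local support, and applying \cite[Proposition 6.3]{HRW_LPR} directly to deduce item (2).
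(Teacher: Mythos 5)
Your proposal is correct and follows essentially the same route as the paper's proof: both deduce item~(1) by noting that the LPR from \cite[Proposition 6.1]{HRW_LPR} is feasible for \eqref{eq:shape_function}, so the $\ell_1$-minimizer inherits the $1+\epsilon$ bound, and both deduce item~(2) from polynomial reproduction, locality of the support, the stability bound, and an extension of $f$ to an ambient neighborhood. The only cosmetic difference is that you plug in the explicit degree-$k$ Taylor polynomial and invoke the remainder estimate, whereas the paper adds and subtracts an arbitrary $p\in\mathcal{P}_k$, bounds by $(2+\epsilon)\|f-p\|_{L_\infty(\Omega\cap B(z,\Theta_k h_X/\epsilon))}$, and then takes the infimum over $p$; the two variants give the same rate with comparable constants.
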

   We note that if $\Omega=\bigcup_{j=1}^K \Omega_j$ is a union of compact regions $\Omega_j$ of algebraic
 manifolds $\M_j$, each satisfying a cone condition,
 then 
 \thcom{$\MLAD_{X}$}
 satisfies \Cref{ass:A_approximator}
 \thcom{on such $\Omega$}
 as well.
 \thcom{In this case, the constants $\Gamma = \min_{j\le K} \Gamma_j$ and $C_T= \max_{j\le K} C_{T,j}$ are determined by the minimum and maximum of the constants for each of the pieces $\Omega_j$ (each constant $C_{T,j}$ and $\Gamma_j$ is  guaranteed by \Cref{lem:lemma_assumption2}).
 The precise dependence of these constants on the geometric quantities can be found in \cite[Section 3.2]{HRW_LPR}.  In practice, we do not assume to have access to such constants, see \Cref{sec:lebesgue} for a discussion.}
 \begin{proof}
 It follows from \cite[Proposition 6.1]{HRW_LPR}
  that for any $z$, \cref{eq:shape_function}
  is well posed and the solution satisfies 
  $\sum_{j=1}^N |a^*(x_j,z)|\le 1+\epsilon$.
  Consequently, we have the stability bound $\|\MLAD_{X}\|_{\ell_{\infty}(X)\to C(\Omega)}\le 1+\epsilon$.
  This is item 1 of \Cref{ass:A_approximator}.
 
By the stability, locality and polynomial reproduction properties of $a^*$, we then have the standard estimate
\begin{align*}
  \left| f(z) -\MLAD_{X}\thcom{(f|_X)(z)}\right| 
  &\le 
    \left| f(z) -p(z)\right|+\left|\MLAD_{X} \thcom{(p|_X)}(z) - \MLAD_{X}\thcom{(f|_X)}(z)\right|\\
  &\le   (2+\epsilon) 
    \|f-p\|_{L_{\infty}(\Omega\cap B(z,\varTheta_k {h_X}/{\epsilon} ))}
\end{align*}
Because the expression on the left is independent of $p$, we can replace this by the infimum:
$$ \left| f(z) -\MLAD_{X}\thcom{(f|_X)}(z)\right| \le (2+\epsilon) \inf_{p\in \mathcal{P}_k} \|f-p\|_{L_{\infty}(\Omega\cap B(z,\varTheta_k {h_X}/{\epsilon} ))}.$$
If $f\in C^{\thcom{k+1}}(\Omega)$, then we can take an extension $\mathcal{E}f\in C^{\thcom{k+1}}(W)$ defined on a neighborhood $W$ of $\Omega$ in \gwcom{$\R^d$} 
 with $\|\mathcal{E}f\|_{C^{\thcom{k+1}}(W)}\le C\|f\|_{C^{\thcom{k+1}}(\Omega)}$.
Thus 
\begin{align*}
&\inf \|f-p\|_{L_{\infty}(\Omega\cap B(z,\varTheta_k{h_X}/{\epsilon} ))}
\le
\inf \|\mathcal{E}f-p\|_{L_{\infty}(W\cap B_{\euc}(z,\varTheta_k{h_X}/{\epsilon}))} \\
&\le
C 
\Bigl(
\frac{h_X }{\epsilon}
\Bigr)^{\thcom{k+1}}
\|\mathcal{E}f\|_{C^{\thcom{k+1}}(W)} 
\le
C 
\Bigl(
\frac{h_X }{\epsilon}
\Bigr)^{\thcom{k+1}}
\|f\|_{C^{\thcom{k+1}}(\Omega)}.
\end{align*}
This is item 2 of \Cref{ass:A_approximator}.
\end{proof}
%
%
%
\subsection{Practical implementation of $\ell_1$minimization}
This section provides a well-known reformulation of the $\ell_1$ minimization as linear program, see e.g. \cite[comment below Theorem 3.1]{Foucart:Rauhut:2013}.
We sketch the idea for the reader's convenience.
We point out that there are several algorithms to solve this convex optimization problem, see e.g. \cite[Section 15]{Foucart:Rauhut:2013} \crcom{or \cite{Wrightbook}}.
Let $\vec{A} \in \R^{m \times n}$, $\vec{b}\in \R^{m}$. Define an admissible set
$	\mathcal{A}=\left\{ \vec{x} \in \R^{n} \ : \ \vec{A}\vec{x}=\vec{b}  \right\}\subset \R^{n}$ and consider the optimization problem
$\vec{x}^{\star}=\arg\min_{\vec{x}\in \mathcal{A}} \| \vec{x}\|_{\ell_1}$.
The idea is to consider a componentwise splitting
$$x_j^+ := \begin{cases} x_j&x_j\ge0\\0&x_j<0,\end{cases}\qquad
x_j^{-}:=  x_j^{+}-x_j.$$
Next, consider the optimization problem, 
\begin{equation}\label{eq:lp}
	\begin{aligned}
		&\text{minimize} 
        \sum_{j=1}^{n} (x^{+}_{j} + x^{-}_{j}) \\
		&\text{subject to } {(x^+,x^-)\in \R^{2n}} \ \text{such that }
        \vec{A} ({\vec{x}\: }^{+}-{\vec{x }\: }^{-})=\vec{b}, \quad {\vec{x}\: }^{+},{\vec{x}\:}^{-}\ge 0. 
	\end{aligned}
\end{equation}
This can be solved with a standard linear program (e.g., \texttt{linprog} in \textsc{Matlab}).

\begin{lemma}
The vector
$(x^+,x^-)$ solves \cref{eq:lp} if and only if  
$x^+$ and $ x^-$ are the positive and negative parts of $\arg\min_{x\in \mathcal{A}} \|x\|_{\ell_1}$
(so $x^+- x^-=
\arg\min_{x\in \mathcal{A}} \|x\|_{\ell_1}$).
\end{lemma}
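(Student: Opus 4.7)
The plan is to derive both directions of the equivalence from a single elementary inequality: for any componentwise nonnegative pair $(u,v) \in \R^{2n}$ with $u,v \geq 0$,
\[
\|u-v\|_{\ell_1} \;\le\; \sum_{j=1}^n(u_j+v_j),
\]
with equality if and only if $u_j v_j = 0$ for every $j$, in which case $u$ and $v$ are exactly the positive and negative parts of $u-v$. This inequality couples the LP objective in \cref{eq:lp} to the $\ell_1$ objective and suffices to match the two optimal values.

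The first step would be to establish that the two problems have the same optimal value. In one direction, any $x\in\mathcal{A}$ admits a standard decomposition $x = x^+ - x^-$ with disjoint supports, and the pair $(x^+,x^-)$ is feasible for \cref{eq:lp} with LP objective exactly $\|x\|_{\ell_1}$; hence $\min\mathrm{LP} \le \min_{x\in\mathcal{A}}\|x\|_{\ell_1}$. In the other direction, any feasible LP pair $(u,v)$ yields $u-v\in\mathcal{A}$ with $\|u-v\|_{\ell_1}\le \sum_j(u_j+v_j)$, giving the reverse inequality.

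The ``if'' direction would then be immediate: if $x^\star\in\arg\min_{x\in\mathcal{A}}\|x\|_{\ell_1}$, then its positive and negative parts form a feasible LP pair whose objective equals the common optimum, so they solve \cref{eq:lp}. For the ``only if'' direction, I would take an LP optimum $(x^+,x^-)$, set $x := x^+-x^-\in\mathcal{A}$, and read off the chain
\[
\|x\|_{\ell_1} \;\le\; \sum_j(x_j^+ + x_j^-) \;=\; \min\mathrm{LP} \;=\; \min_{y\in\mathcal{A}}\|y\|_{\ell_1} \;\le\; \|x\|_{\ell_1},
\]
which forces equality throughout. Equality in the leftmost step triggers the equality case of the opening inequality, so $x_j^+ x_j^- = 0$ for every $j$, identifying $x^+$ and $x^-$ as the positive and negative parts of $x$; the inner equality then identifies $x$ as an $\ell_1$ minimizer.

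The only mildly delicate point is this disjoint-support conclusion, and it is precisely what the equality case of the opening inequality is designed to deliver. Without it, an LP-optimal pair could a priori have both $x_j^+$ and $x_j^-$ strictly positive at some index, which would break the claim of the lemma. Beyond this, the argument involves no analytic input and is a routine reformulation of $\ell_1$-minimization as a linear program, so no serious obstacle is expected.
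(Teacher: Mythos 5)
Your proof is correct and takes essentially the same route as the paper. The paper phrases the key step as a perturbation argument (if $z_{\tilde j}:=\min\{x^+_{\tilde j},x^-_{\tilde j}\}>0$, subtracting $z_{\tilde j}$ from both entries yields a feasible pair with strictly smaller LP objective, so an LP optimum must have disjoint supports), whereas you package the same observation as the inequality $\|u-v\|_{\ell_1}\le\sum_j(u_j+v_j)$ for $u,v\ge 0$ with equality iff $u_j v_j=0$; these are equivalent, and both lead to matching the two optimal values and then reading off each direction of the equivalence.
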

\begin{proof}
If $(x^+,x^-)$ solves \cref{eq:lp}, then  
$x^+-x^-\in \mathcal{A}$. So 
if $x^* :=\arg\min_{x\in \mathcal{A}} \|x\|_{\ell_1}$, then
$\|x^*\|_{\ell_1} \le x^++x^-$.

To show that
the splitting into positive/negative parts
is the solution to \cref{eq:lp},
observe that if 
$z_{\tilde{j}}:=\min\{x^{+}_{\tilde{j}},x^{-}_{\tilde{j}}\}>0$, 
we have that
\crcom{
$$x_{\tilde{j}}
=(x^{+}_{\tilde{j}}-z_{\tilde{j}})-(x^{-}_{\tilde{j}}-z_{\tilde{j}})
=
x^{+}_{\tilde{j}}-x^{-}_{\tilde{j}},$$ with $(x^{+}_{\tilde{j}}-z_{\tilde{j}}),(x^{-}_{\tilde{j}}-z_{\tilde{j}})\ge0 $ is an admissible decomposition }
(i.e., $\vec{A} (\vec{x}^{+}-\vec{x}^{-})=\vec{b}$) as well, where one term vanishes.
\end{proof}

We note that the direct application of \cref{eq:lp} to \cref{eq:shape_function} would
use, for fixed $z$, $\vec{A} :=V_z$, $\vec{b}=p_z$ and $\vec{x}= a^*(\cdot,z)$.
Thus $m= \dim(\mathcal{P}_k(\M))$ and $n= \# (X\cap B(z, \varTheta_k h_X /{\epsilon} ))$.

\crcom{
As specific methods to solve the linear program we point out the iterative algorithm presented in \cite{Wright1996}. }
\thcom{With a user-determined} \crcom{ value for the duality measure}
 \thcom{$\mu>0$,}
\crcom{ this  method produces after $K=\mathcal{O}(n^2 \log(\mu^{-1}))$ iterations
an $a^{(k)}(\cdot,z)$ for $k\ge K$  which is primal admissible. i.e., satisfies the
constraint from (\ref{eq:shape_function}), namely $V_z a^{(k)}(\cdot,z)=p_z$, and which satisfies an error bound
\begin{align*}
    \|a^{(k)}(\cdot,z)\|_{\ell_1{(X_z  )}} \le \|a^{\ast}(\cdot,z)\|_{\ell_1{(X_z)}}+\sqrt{n}\mu.
\end{align*}
This estimate follows by combining \cite[Corollary 3.7, Theorem 5.5]{Wright1996} with an application of  H\"older's inequality.}
\thcom{This is an efficiently implementable alternative to the
$\ell_1$-minimizer in that 
the choice $\mu=n^{-1/2}\epsilon$
permits it to satisfy  \cref{ass:A_approximator}. Indeed, by following the
proof of \cref{{lem:lemma_assumption2}},
polynomial reproduction guarantees that  the perturbed operator
$$\widetilde{\MLAD}:\vec{c}\mapsto \sum_{j=1}^N c_{j} a^{(k)}(x_j,\cdot)$$ 
enjoys
 approximation rate
 $\|f-\widetilde{\MLAD}(f|_X)\|_{\infty}
 \le C (h_X/\epsilon)^{k+1} \|f\|_{C^{k+1}(\M)}$,
 while the above error bound
 assures $  \|a^{(k)}(\cdot,z)\|_{\ell_1{(X_z  )}} \le 1+2\epsilon$.
Note that for quasi-uniform point sets,
we have $n\sim \epsilon^{-d_{\M}}$,
which yields an iteration count of
$K=\mathcal{O}(\epsilon^{-2d_{\M}}|\log \epsilon|)$.
There are also interior point methods with different cost complexities, namely $K=
\mathcal{O}(n^c \log(\mu^{-1}))
$ for $c\le 2$ (and thus
$K=
\mathcal{O}(\epsilon^{-c d_{\M}} \log(\epsilon))$ in case
of quasi-uniformity)
but the error analysis gets more involved in those cases, see \cite{Wright1996}. 
}
%
%
%
\section{Numerical methods for ODEs on manifolds}
\label{sec:ODE}
In this section, we discuss a
strategy for satisfying \Cref{ass:A_integrator}: obtaining a high-order 
ODE solver for embedded manifolds. Our approach is a variant of the projection method, as 
considered in \cite[Algorithm 4.2]{HairerODE},
and used in this context in \cite{PLR}.
We use a one-step Runge-Kutta (RK) solver in the ambient space,
which produces a point in \gwcom{$\R^d$} near to the manifold,
 and then maps the computed solution onto the manifold via a 
suitable {\em retraction}
$r$ (described below). In short, we consider a method which looks like
$$
y_n 
\qquad \rightsquigarrow  
 \qquad 
\tilde{y}_{n+1} = \Phi_h (y_n)
\qquad
\rightsquigarrow  
\qquad
 y_{n+1}=r( \tilde{y}_{n+1} ).$$ 
A fundamental requirement of this method is that the retraction produces an error on par with the one-step solver.

A basic version of this method
 selects
 $\mathrm{proj}_{\M}(x) =\mathrm{argmin}_{z\in\M} |x-z|$,
 the closest  point from the manifold.
As shown in \cite{PLR}, the closest point retraction  yields a one step method which performs as well, i.e., with the same approximation rates,
as the Euclidean version 
from which it derives. 
However,
determining the closest point  can pose a challenging task.
In this section, we make a modest relaxation to this approach which  avoids this substantial problem by allowing more general
retractions while  delivering similar theoretical results.
%
%
%
\subsection{Tubular neighborhood}
Let $\mathbb M \subset \R^{d}$ be a smooth compact embedded $d_{\M}$-dimensional submanifold.
Here, we follow \cite[Chapter 6]{Lee}. 
We consider the normal bundle
$$N\M=\left\{ (\xi, v) \in \R^{d} \times \R^{d} \ : \ \xi \in \M, \ v\in N_{\xi}\M \right\}\subset \R^{d} \times \R^{d},$$
where $N_{\xi}\M $ denotes the normal space which is a $(d-d_{\M})$ dimensional vector space such that 
$\R^{d}\cong \MLAD_{\xi}\R^{d}= \MLAD_{\xi} \M \oplus N_{\xi}\M$, where $\MLAD_{\xi}\M\subset \R^{d}$ denotes the tangent space.
The map
$$ E: N\M \to \R^{d}, (\xi,v)\mapsto E(\xi,v)=\xi+v$$
is smooth, and
by \cite[Theorem 6.24]{Lee},
 for some $\delta>0$,
 $E$ is a diffeomorphism when restricted to the subset 
 $V_{\delta}=\left\{(\xi,v) \in N\M \ : \ |v|  < \delta \right\}$.
This 
defines 
a tubular neighborhood  $U_{\delta}^\M$: 
$$\M \subset U_{\delta}^\M:=E(V_{\delta}) \subset \R^{d}.$$
Clearly if $0<\delta'\le \delta$, then $U_{\delta'}^\M$ is also a tubular neighborhood. 
The maximum value $\delta$ for which  this holds (i.e., for which $E$ restricted to $V_{\delta}$ is a diffeomorphism)
 is called the {\em reach} of $\M$, as initially studied in \cite{Federer}.
Estimating the size of the reach in terms of local and global geometric quantities is 
still actively investigated \cite{Niyogi,Aamari}.
%
%
%
\subsubsection*{Retractions:}
A retraction is a  map $r: U_{\delta}^\M\to \M$, defined for some $\delta>0$, for which $r|_{\M} = \mathrm{id}$.
The  algorithm  below requires a numerical realization of a retraction to $\M$.

\noindent An example of a retraction is the map $\mathrm{proj}_{\M}$, which produces the closest point from the manifold. For 
an embedded manifold with reach $\tau$,
$$
	\mathrm{proj}_{\M}:U_{\tau}^\M \to \M:  \xi \mapsto \arg\min_{\zeta \in \M} |\zeta-\xi|.
$$
In the case that $\M \subset \R^d$ is a  smooth embedded manifold without boundary, $\mathrm{proj}_{\M}$
is smooth, since it can be expressed as the composition of smooth maps $\mathrm{proj}_{\M} = \pi \circ E^{-1}$, where 
$\pi: N\M \to \M$ is the natural projector and $E$ is defined above. 

In some cases, namely spheres, ellipsoids, and tori, $\mathrm{proj}_{\M}(x)$ can be computed by an appeal to elementary geometry.
For certain matrix manifolds (orthogonal groups, Stiefel and Grassmannian manifolds), explicit formulas for $\mathrm{proj}_{\M}$ exist
via matrix factorizations and by employing the SVD -- see \cite{Absil}.
In the case that $\M$ is an algebraic variety, the set of critical points of $\mathrm{proj}_{\M}$ is a finite set  (of constant cardinality!).
This finite set can be computed as variety of the so-called critical ideal.
This is described in \cite[Lemma 2.1]{Draisma:etal:2016}; see also \cite{Julia} for some worked out examples which 
calculate both the reach of an algebraic variety as well as the closest point.

For a general manifold given as the zero set of smooth functions, 
the approach of \cite{Macdonald} provides an algorithm, but requires access to the smooth function
$x\mapsto \mathrm{min}_{z\in \M} |z- x|^2$, which is, in general, not easily computable (as mentioned there, it can be obtained by solving a
nonlinear eikonal equation, see  also \cite[Theorem 3.1]{Ambrosio:Soner:1996}).
This is the motivation for considering more general retractions in \Cref{sec:MbRK} -- we note that the method presented in \cite[Section 5]{Macdonald} produces these under fairly general conditions.
The result given in \Cref{sec:ncp} shows that such retractions produce near optimal points, which is a requirement of \Cref{lem:L_MbRK} below.
%
%
%
\subsection{Manifold-based Runge-Kutta}
\label{sec:MbRK}
We consider a dynamical system 
\begin{equation*}
	\begin{aligned}
		{\dot{\zeta}(s)}&{= g(s,\zeta(s)) },&
		\zeta(0)&=\zeta_{0} \in \M,
	\end{aligned}
\end{equation*}
were 
${g: (-\epsilon,\epsilon)\times \M \to \R^d}$ 
with 
${g(s,\zeta)\in \MLAD_{\zeta} \M \subset \R^{d}}$
i.e., a vector field. 
Moreover, we assume the vector field to be smooth in the sense of 
$g(s,\cdot) \in C^{\ell}(\M, \R^{d})$.
Then there is a maximal interval $I_{\zeta_{0}} \subset \R$ such that 
$\zeta: I_{\zeta_{0}} \to \M$ satisfies the differential equation. 
Finally, $\zeta$ is unique in the sense that every other solution $\tilde{\zeta}:\tilde{I}_{\zeta_{0}}\to \M $ with $\tilde{I}_{\zeta_{0}}\subset  I_{\zeta_{0}} $  satisfies $\zeta_{\tilde{I}_{\zeta_{0}}}=\tilde{\zeta}_{\tilde{I}_{\zeta_{0}}}$.
For a retraction $r:U_{\delta}^{\M}\to \M$, we consider the extension 
\begin{equation*}
	{G:(-\epsilon,\epsilon)\times U_{\delta}^\M \to \R^N:(s,z) \mapsto g(s,r(z))}
\end{equation*}
which leads to the following dynamical system
\begin{equation}
\label{eq:system}
	\begin{aligned}
		{\dot{z}(s)}&{= G(s,z(s)) }&,
		z(0)&=\zeta_{0} \in \M.
	\end{aligned}
\end{equation}
The retraction property $r|_{\M}=\operatorname{id}$ ensures with the uniqueness of solutions that $z:I_{\zeta_{0}}\to \M$ and $z|_{I_{\zeta_{0}}}=\zeta|_{I_{\zeta_{0}}}$. In particular, we have $z(t) \in \M$ for $t\in I_{\zeta_{0}}$.

For the numerical solution, we consider a RK method \crcom{$\tilde{\Phi}_{RK}$ in the ambient space}. 
Let $\alpha_{\ell}, \beta_{j,\ell}, \gamma_{\ell}$, $\ell=1,\dots,m$ denote the weights for an $m$-step RK method, typically given in form of a Butcher's tableau.
We denote by $\eta_i$ the approximation to the true solution $z(t_i)=\zeta(t_i)$.
We set $t_{i+1}-t_{i} = h_{t}$, $\eta_0=z(0)=\zeta_0$ and define, recursively,  for $i\in \N$
\begin{equation}\label{eq:RK}
	\begin{aligned}
		k_{j}(t_i, \eta_{i}, h_{t}) &= {G} \left( t_i+\alpha_j h_t, \eta_{i} + h_{t} \sum_{b=1}^{m} \beta_{j,b} k_{b}(t_i,\eta_i,h_{t})  \right) \quad j=1,\dots,m\\
		\hat{\eta}_{i+1}&=\eta_{i} + h_{t} \sum_{b=1}^{m} \gamma_{b} k_{b}(t_{i},\eta_{i},h_{t}) = \eta_{i} +h_{t} \crcom{\tilde{\Phi}_{RK}}(t_i,\eta_i,h_{t})\\
		\eta_{i+1}&=r\left(\hat{\eta}_{i+1} \right),	
		\end{aligned}
\end{equation}
where we employ
$\eta_{i} \in \M $ and hence $r(\eta_{i})=\eta_{i}$. \crcom{This yields in particular 
$$\tilde{\Phi}_{RK}(t,\eta,h)=\sum_{b=1}^{m} \gamma_{b} k_{b}(t,\eta,h)$$.}
If $\beta_{j,b}=0$ for $b \ge j$, the RK method is called explicit, otherwise it is  implicit.
To ensure that the retraction is well-defined
(i.e., $\hat{\eta}_{i+1} \in U_{\delta}^\M$ for each $i$)
we require
that the time step $h_t$ satisfies
\begin{equation}\label{eq:welldefRK}
	 h_{t} \left| \sum_{b=1}^{m} \beta_{j,b} k_{b}(t_i,\eta_i,h_{t})\right| < \delta,
\end{equation}
where the coefficients are from \cref{eq:RK}.
In case of implicit RK methods, we \crcom{had additionally to assume 
	 $ h_{t} \le \left(L^{(2)}_{\bar{G}}\max_{i=1}^{m}\sum_{j=1}^{m} |\beta_{i,j}| \right)^{-1}$, where $L^{(2)}_{\bar{G}}>0$ denotes the Lipschitz constant of $\bar{G}$ with respect to the second argument, see e.g. \cite[Theorem 14]{Butcher}. But we do not consider those schemes here.}
Implicit RK methods allow for simple consistency criteria based on polynomial exactness for quadrature.
For explicit RK methods, the choices for weights are typically more involved, see e.g., \cite[Sec 7.3]{HermannSaravi}.
The next lemma shows that a consistent global RK method (given in \cref{eq:consistencyunconstrianed}) leads to consistency of the method \cref{eq:RK}, see also \cite{PLR} for such a statement.
\begin{lemma}
\label{lem:L_MbRK}
Consider 
\cref{eq:system} with $G(t,\cdot) \in C^{\ell}(U_{\delta}^{\M})$ for each $t\in(-\epsilon,\epsilon)$.
Suppose that timestep $h_t$ is small enough to satisfy \cref{eq:welldefRK} and that the RK method \cref{eq:RK} provides consistency order $\ell$:
\begin{equation}
    \label{eq:consistencyunconstrianed}
    \left|z(t+h_t) - \left(z(t)+h_t \crcom{\tilde{\Phi}_{RK}}\left(t,z(t),h_t) \right) \right) \right|\le \CRK h^{\ell+1}_{t},
\end{equation}
\crcom{where $\tilde{\Phi}_{RK}$ encodes the ambient space RK method introduced in \cref{eq:RK}.}
If there is a constant $\Cret$ so that $r:U_{\delta}^{\M}\to \M $ is a retraction satisfying the {\em near closest point} property $|x-r(x)|\le \Cret \min_{z\in\M} |x-z|$ for all
$x\in U_{\delta}^{\M}$, then
\begin{equation}
    \label{eq:consistency}
    \left|z(t+h_t) - r \left(z(t)+h_t \crcom{\tilde{\Phi}_{RK}}\left(t,z(t),h_t) \right) \right) \right|\le \CRK(\Cret+1)  h^{\ell+1}_{t}.
\end{equation}
\end{lemma}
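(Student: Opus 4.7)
The plan is to reduce the bound on the left-hand side of \cref{eq:consistency} to an application of the triangle inequality, using the ambient-space consistency \cref{eq:consistencyunconstrianed} together with the near closest point property of the retraction. The essential structural observation is that because $\zeta_0 \in \M$ and the vector field $g$ is tangential, the exact solution $z$ of \cref{eq:system} remains on $\M$ throughout $I_{\zeta_0}$; in particular $z(t+h_t) \in \M$.

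First, I would introduce the auxiliary point
\[
\tilde{\eta} := z(t) + h_t \tilde{\Phi}_{RK}\bigl(t,z(t),h_t\bigr),
\]
which is the unretracted one-step ambient RK approximation. Note that \cref{eq:welldefRK} ensures $\tilde{\eta}\in U^{\M}_\delta$, so that $r(\tilde{\eta})$ is defined. Then by the triangle inequality,
\[
\bigl|z(t+h_t) - r(\tilde{\eta})\bigr|
\le \bigl|z(t+h_t) - \tilde{\eta}\bigr| + \bigl|\tilde{\eta} - r(\tilde{\eta})\bigr|.
\]
The first term is controlled directly by the hypothesized ambient-space consistency \cref{eq:consistencyunconstrianed}, giving a bound of $\CRK h_t^{\ell+1}$.

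For the second term, I would invoke the near closest point property of $r$ to write
\[
\bigl|\tilde{\eta} - r(\tilde{\eta})\bigr| \le \Cret \min_{z\in \M} |\tilde{\eta}-z|.
\]
Since $z(t+h_t)$ is a particular element of $\M$, we may estimate the minimum from above by $|\tilde{\eta}-z(t+h_t)|$, which is again bounded by $\CRK h_t^{\ell+1}$ via \cref{eq:consistencyunconstrianed}. Combining the two terms yields the claimed constant $\CRK(\Cret+1)$, and the lemma follows.

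The only subtlety worth highlighting is the invariance of $\M$ under the exact flow: this is what makes $z(t+h_t)$ a legitimate competitor in the minimization defining the near closest point bound. Once that is noted, no further analysis of the particular RK weights $\alpha_\ell,\beta_{j,\ell},\gamma_\ell$ or of the differential structure of $r$ is required, and the proof is essentially two lines after the triangle inequality step. I do not foresee a substantive obstacle; the value of the lemma lies in its cleanly packaging the fact that any retraction with the near closest point property preserves the global order of the underlying ambient-space integrator.
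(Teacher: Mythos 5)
Your proof is correct and follows essentially the same route as the paper's: a triangle inequality splitting at the unretracted ambient RK step, bounding the first term by the assumed ambient consistency and the second by the near closest point property with $z(t+h_t)\in\M$ serving as the competitor. You also correctly identify the one subtlety the paper establishes before the lemma, namely that $\M$ is flow-invariant for \cref{eq:system} because $r|_{\M}=\operatorname{id}$ and uniqueness of solutions forces $z(s)=\zeta(s)\in\M$.
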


\begin{proof}
    We point out that in general $z(t+h_t)\approx  z(t) +h_{t} \crcom{\tilde{\Phi}_{RK}}(t,z(t),h_{t})  \not \in \M$, though $z(t)=\zeta(t) \in \M$ for all $t\in I_{\zeta_{0}}$. 
    The condition \eqref{eq:welldefRK}, however, makes sure that $z(t) +h_{t} \crcom{\tilde{\Phi}_{RK}}(t,z(t),h_{t}) \in U_{\delta}^\M$. Let $  \varepsilon(t):=
	\left|z(t+h_t)- r\left( z(t) +h_{t} \crcom{\tilde{\Phi}_{RK}}(t,z(t),h_{t})  \right) \right|$
    denote the one-step error.
By applying the triangle inequality, we observe
that
\begin{align*}
	\varepsilon(t)
    &\le \left|z(t+h_t)-(z(t) +h_{t} \crcom{\tilde{\Phi}_{RK}}(t,z(t),h_{t}) ))\right|\\
	&+\left| (z(t) +h_{t} \crcom{\tilde{\Phi}_{RK}}(t,z(t),h_{t}) ))- r\left( z(t) +h_{t} \crcom{\tilde{\Phi}_{RK}}(t,z(t),h_{t})  \right) \right|\\
	&\le \left|z(t+h_t)-(z(t) +h_{t} \crcom{\tilde{\Phi}_{RK}}(t,z(t),h_{t}) )\right|\\&+
	\Cret\left| (z(t) +h_{t} \crcom{\tilde{\Phi}_{RK}}(t,z(t),h_{t}) ))-z(t+h)\right|\\
	&= (\Cret+1) \left|z(t+h_t)-(z(t) +h_{t} \crcom{\tilde{\Phi}_{RK}}(t,z(t),h_{t}) ))\right| \le \CRK(\Cret+1)  h^{\ell+1}_{t},
\end{align*}
where we used the closest
point property of $r$, i.e., $\left| x -r(x)\right| \le  \Cret\left| x -y \right|$ for all $y \in \M$ and the fact that $z(t+h) \in 
\M$ is admissible. 
\end{proof}
This shows that the conditions in \cref{eq:One_step_error} can be met by conventional RK methods. 
\crcom{
This means for \Cref{ass:A_integrator}, that we define}
$$
 \thcom{  \Phi_{(x,t)}(h):= r(x+h\tilde{\Phi}_{RK}(t,x,h)).}
$$
If we consider a bounded subset $\Omega \subset \M$, we need to make sure that the solutions stay in $\Omega$, i.e., $\Omega$ is flow-invariant. Sufficient conditions for flow-invariance are known in the Euclidean setting, see for example \cite{Redheffer,Ladde:Lakshmikantham:1974}.

As in \cite[Eq. 4.1]{FalconeFerretti}, we consider a Bony-type condition.
This requires in particular \crcom{$x \mapsto G(x)$} to be Lipschitz continuous.  
\crcom{Moreover, we require $\M$ to be a $C^2$ manifold and $\Omega \subset \M$ to be a closed set, which allows to define an exterior normal set $\thcom{\mathfrak{N}_{e}}(\Omega) \subset T^{\star}\M \setminus \{0\}$, see \cite[Definition 8.5.7]{Hormander1}.
Then, flow invariance is ensured by
\begin{equation}
\label{eq:flowinvariance}
    (G(x),\nu) <0 \quad \text{for all } (x,\nu)\in \mathfrak{N}_{e}(\Omega),
\end{equation}
see \cite[Theorem 8.5.11]{Hormander1}. 
An inspection of the proof reveals that this holds also for non-autonomous systems if $x \mapsto G(s,x)$ is now Lipschitz independently of $s$ and $(G(s,x),\nu) <0$ for all $(x,\nu)\in \thcom{\mathfrak{N}_{e}(\Omega)}$ and all $s\in [0,t_m]$.
}
%
%
%
\subsection{Near closest point property of smooth retractions}
\label{sec:ncp}
We now show that the retraction property is sufficient to guarantee 
that it produces {\em near-closest points}.
\begin{lemma}
If $r:U_{\delta}^\M\to \M$  is a $C^1$ retraction then there exists $\Cret$ 
 and $\delta'>0$  so that if $z\in U_{\delta'}^\M$, then 
 $$|z-r(z)| \le \Cret |z-\mathrm{proj}_{\M}(z)|.$$
\end{lemma}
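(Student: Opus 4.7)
The plan is to exploit the retraction property $r|_{\M}=\mathrm{id}$ together with $C^1$ regularity of $r$ to control $|z-r(z)|$ by the distance from $z$ to its closest point on $\M$. Write $p:=\mathrm{proj}_{\M}(z)$; since $p\in\M$ we have $r(p)=p$, so the triangle inequality gives
\begin{equation*}
|z-r(z)|\le |z-p|+|r(p)-r(z)|.
\end{equation*}
The task reduces to bounding $|r(p)-r(z)|$ by a constant multiple of $|z-p|$, which is exactly a local Lipschitz estimate for $r$.

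First, I would fix $\delta'>0$ small enough that the closest point $\mathrm{proj}_{\M}(z)$ is well-defined on $U_{\delta'}^{\M}$ and that $\overline{U_{\delta'}^{\M}}$ is a compact subset of $U_{\delta}^{\M}$. The first condition requires $\delta'\le\tau$, where $\tau$ is the reach of $\M$ (positive by compactness and the tubular neighborhood theorem quoted earlier). The second is arranged by choosing $\delta'$ strictly smaller than $\delta$; compactness of $\M$ guarantees that the resulting closed tube is compact. On this compact set the derivative $\mathrm{D}r$ is continuous, hence bounded in operator norm by some $L<\infty$, and therefore $r$ is Lipschitz with constant $L$ on $\overline{U_{\delta'}^{\M}}$.

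Second, for any $z\in U_{\delta'}^{\M}$ the closest point $p=\mathrm{proj}_{\M}(z)$ lies in $\M\subset\overline{U_{\delta'}^{\M}}$, and the segment from $z$ to $p$ stays inside the convex set of ambient points within distance $\delta'$ of $\M$ (in particular inside $\overline{U_{\delta'}^{\M}}$), so the Lipschitz estimate applies:
\begin{equation*}
|r(z)-r(p)|\le L|z-p|.
\end{equation*}
Combined with $r(p)=p$ and the triangle inequality above, this yields
\begin{equation*}
|z-r(z)|\le (1+L)|z-\mathrm{proj}_{\M}(z)|,
\end{equation*}
and the lemma follows with $\Cret:=1+L$.

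The only subtle point is ensuring the segment from $z$ to $p$ remains inside the domain where the derivative bound holds; this is the reason for choosing $\delta'$ with $\overline{U_{\delta'}^{\M}}\subset U_{\delta}^{\M}$ rather than $\delta'=\delta$. Beyond this, the argument is essentially the mean value inequality applied to $r$ anchored at its fixed points on $\M$.
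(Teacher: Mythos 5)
Your proof is correct and genuinely simpler than the paper's. The paper takes a more elaborate route: for each $z\in\M$ it builds a chart $\psi_z$ and a defining function $\Lambda_z$, packages these into a map $\Psi_z(y)=(\psi_z(r(y)),\Lambda_z(y))$, proves $d\Psi_z(z)$ is invertible (so $\Psi_z$ is a local bi-Lipschitz diffeomorphism), then exploits $\Lambda_z(r(x))=\Lambda_z(\mathrm{proj}_\M(x))=0$ and the bi-Lipschitz constants $C_1,C_2$ to obtain $\Cret=C_2/C_1$. You instead observe that $r(\mathrm{proj}_\M(z))=\mathrm{proj}_\M(z)$, apply the triangle inequality, and bound $|r(z)-r(\mathrm{proj}_\M(z))|$ by the mean value inequality along the segment, giving $\Cret=1+L$ with $L=\sup\|\mathrm{D}r\|$ on a compact sub-tube. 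This avoids charts, defining functions, and the inverse function theorem entirely; the paper's approach is arguably closer to the methods used elsewhere in their Section 4, but for this specific statement your argument is more direct and yields an equally serviceable constant.

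One small inaccuracy in your write-up: the tube $\{x:\mathrm{dist}(x,\M)\le\delta'\}$ is \emph{not} convex in general (consider $\M$ a sphere). The reason the segment $[z,\mathrm{proj}_\M(z)]$ stays inside the tube is different and more elementary: every point on that segment is within distance $|z-\mathrm{proj}_\M(z)|<\delta'$ of the single point $\mathrm{proj}_\M(z)\in\M$. You should state that instead of appealing to convexity. Relatedly, ``bounded derivative on a compact set'' does not automatically give a global Lipschitz bound on that set — but since you only ever apply the estimate along a segment that you have shown to lie inside the tube, the mean value inequality along that segment is exactly what is needed, and the argument goes through.
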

\begin{proof}
For $z\in \M$, let $O_z\subset \M$  be a neighborhood so that  $\psi_z:O_z\to \R^d$ is a chart.
Find a  neighborhood ${W_z} \subset r^{-1}(O_z) \subset U_{\delta}^\M$ of $z$
where $\M\cap W_z$ is a level set of a defining function
$\Lambda_z: W_z\to \R^{d-d_{\M}}$,
 guaranteed to exist by \cite[Proposition 5.16]{Lee}.
Thus  $\M\cap W_z =\Lambda_z^{-1}(0)$
and    at every point $x$ of  
$\M\cap W_z$,
$d \Lambda_z(x)$ is surjective.
Define  $\Psi_z: {W_z} \to \R^d$ by
 $$\Psi_z(y) =  \bigl(\psi_z(r(y)), \Lambda_z(y)\bigr).$$
 We now show that $d\Psi_z(z):\MLAD_z\R^d\to \MLAD_z\R^d$ is nonsingular.
 
 For $v\in  \MLAD_z\R^N= \MLAD_z\M \oplus N_z\M$, write  $v=v^{\|} +v^{\perp}$, with $v^{\|}\in \MLAD_z\M$
 and $v^{\perp}\in N_z\M$.
 Note that $d\Psi_z(z) v=0 $ ensures that 
 $d\Lambda_z(z) v = d\Lambda_z(z) v^{\perp}=0$ since $d\Lambda_z(z) v^{\|} =0$ by the level set property.
But  the restriction of $d\Lambda_z(z)$ to
the normal space $N_z\M$ is an isomorphism: i.e.,  $d\Lambda_z(z)|_{N_z\M}: 
N_z\M\to \R^{d-d_{\M}}$  is injective.
 Thus  $v^{\perp} =0$.
 
 The differential $dr(z):\MLAD_zU_{\delta}^\M \to \MLAD_z\M$ is surjective, since each $v\in \MLAD_z\M$ is
 a tangent vector in $\MLAD_zU_{\delta}^\M$ and $dr(z) v = v$, since $r|_{\M} = \mathrm{id}$.
 By the chain rule, the differential of $\psi_z\circ r$ at $z$ is $d\psi_z(r(z))dr(z)$.
 Thus  $d\Psi_z(z) v^{\|}=0$, which implies that
 $d\psi_z(r(z))dr(z) v^{\|}=0$, which implies $dr(z)v^{\|}=0$, and so $v^{\|}=0$.

By the inverse function theorem, there is a neighborhood $W_z^{\flat} \subset U_{\delta}^\M$  of $z$ 
on which $\Psi_z:W_z^\flat\to \Psi(W_z^\flat)$ is a $C^1$ diffeomorphism. Consequently, there are
constants $0<C_1(z)\le C_2(z)$ so that  for any $x,y\in W_z^\flat$, 
$$C_1(z) |x-y| \le |\Psi_z(x)-\Psi_z(y)|\le C_2(z)|x-y|.$$ 
Define $W_z^{\sharp}:=\{x\in W_z^\flat\mid r(x)\in W_z^\flat\}$. 
Since this is a neighborhood of $z$,
there is  $R_z>0$  so that $B(z,2R_z)\subset  W_z^\sharp$.

By compactness, there exist $z_1\dots z_M\in\M$ so that  $\M\subset  \bigcup_{j=1}^M B(z_j,R_{z_j})\subset U_{\delta}\M$.
Since this is a neighborhood of $\M$, there is
a tubular neighborhood $U_{\delta'}^\M \subset \bigcup_{j=1}^M B(z_j,R_{z_j}).$
Let $C_1:=\min(C_1(z_j))$ and let $C_2:=\max(C_2(z_j))$.

For any $x\in U_{\delta'}^\M$, $x$ is in some $B(z,R_z)\subset W_z^\sharp$,
and thus $r(x)\in W_z^\flat$.
We have also that 
$\mathrm{proj}_{\M}(x)\in W_z^\flat$, since   $|\mathrm{proj}_{\M}(x)-x|\le |x-z| <R$,
which implies that
$\mathrm{proj}_{\M}(x) \in B(z,2R)$.
Finally, because $x,r(x)$ and $\mathrm{proj}_{\M}(x)$ are in $W_z^\flat$, we have that
$$|x-r(x)| \le \frac{1}{C_1}  |\Psi_z(x) -\Psi_z(r(x))| = \frac{1}{C_1} |\Lambda_z(x) -\Lambda_z (r(x))| =  \frac{1}{C_1}|\Lambda_z(x)|,$$
while $\Lambda_z(\mathrm{proj}_{\M}(x)) =0$ implies
 $|x-r(x)|  \le  \frac{1}{C_1} |\Lambda_z(x) -\Lambda_z (\mathrm{proj}_{\M}(x) )|$.
Thus
$$|x-r(x)| 
\le
\frac{1}{C_1} |\Psi_z(x) -\Psi_z (\mathrm{proj}_{\M}(x) )|
\le
  \frac{C_2}{C_1}|x-\mathrm{proj}_{\M}(x)|.$$
Thus the lemma holds with $\Cret = \frac{C_2}{C_1}$.
\end{proof}
We note that \cite[Section 5]{Macdonald} presents a constructive method for producing smooth retractions
(called {\em closest point functions} by the authors) when $\M$ is given as a level set. \crcom{It is possible} to construct $\mathrm{proj}_{\M}$ (called the {\em Euclidean closest point function} by the authors) 
when $\M$ is a hypersurface, if one has access to the signed distance to $\M$.
%
%
%
\section{Numerical Results}
\label{sec:NumericalResults}
We consider several numerical tests to illustrate the theory developed in the preceding sections. The first set of tests focus on the Lebesgue constants of the new remapping operator $\MLAD_{X}$ discussed in \Cref{sec:ell1} and compares these to the more standard MLS spatial approximator $\MLS_{X}$ in \cref{eq:Pol_MLS}. The next experiments focus on SL advection in an autonomous velocity field on the torus.  The final test explores the SL scheme when the remapping operators do not satisfy \Cref{ass:A_approximator}, to demonstrate that there may be more to the stability of these schemes than the theory developed here.  

For the SL advection tests we use explicit RK methods of various orders $\ell$ for the time integrator. The polynomial precision of the operators $\MLAD_{X}$ and $\MLS_X$ used in the experiments are given by the parameter $k$.

For point sets $X$ on the sphere, we use quasiuniform maximal determinant points of varying size $N$ available from~\cite{Wright_SpherePts_2018}, while for the torus we use point sets generated from DistMesh~\cite{DistMesh}. Both of these points have fill-distances that satisfy $h_{X}\sim 1/\sqrt{N}$.  Examples from each set are shown in \Cref{fig:nodesets}.  For the sphere, the cardinalities of the point sets for the experiments are 5184, 7396, 11449, 21025, and 36864, while for the torus they are 5312, 7520, 11600, 21056, 47320.

\begin{figure}[htb]
\centering
\begin{tabular}{cc}
\includegraphics[width=0.35\textwidth]{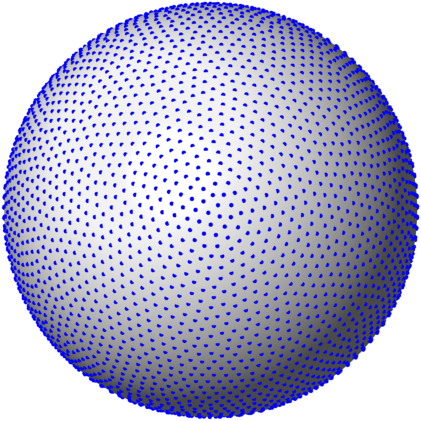} & \includegraphics[width=0.45\textwidth]{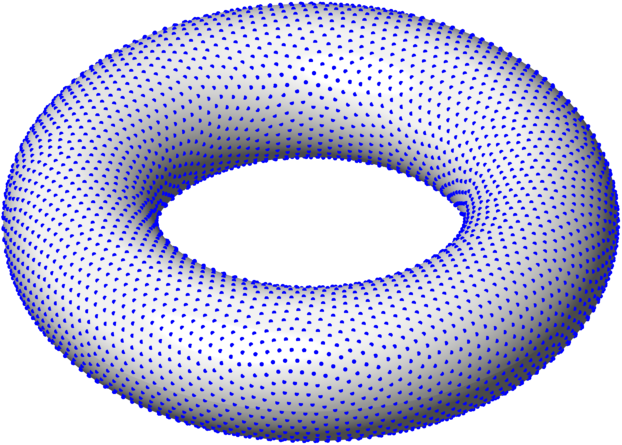}  \\
(a) & (b)
\end{tabular}
\caption{Example of the node sets $X$ used in the numerical experiments: (a) $N=4096$ nodes on the sphere and (b) $5312$ nodes on the torus.  The inner radius of the torus is 1/3, while the outer radius is 1.\label{fig:nodesets}}
\end{figure}
%
%
%
\subsection{Lebesgue Constants\label{sec:lebesgue}}

\begin{figure}[htb]
\centering
\begin{tabular}{cc}
\includegraphics[width=0.46\textwidth]{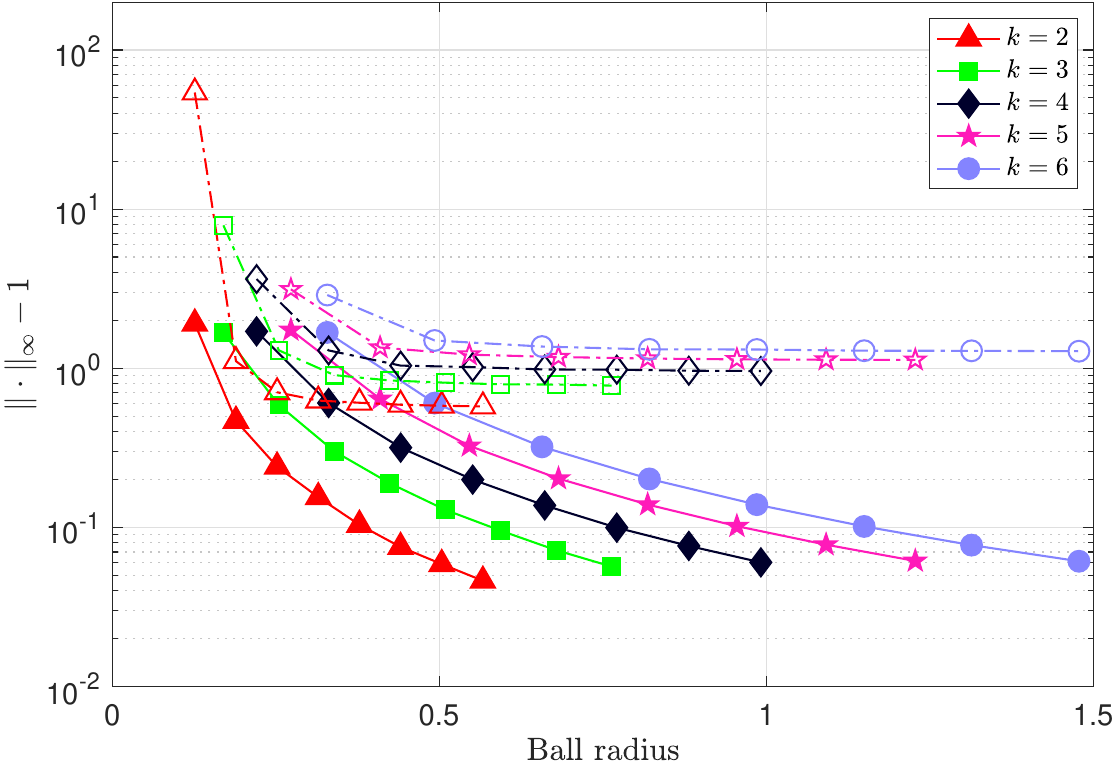} & \includegraphics[width=0.46\textwidth]{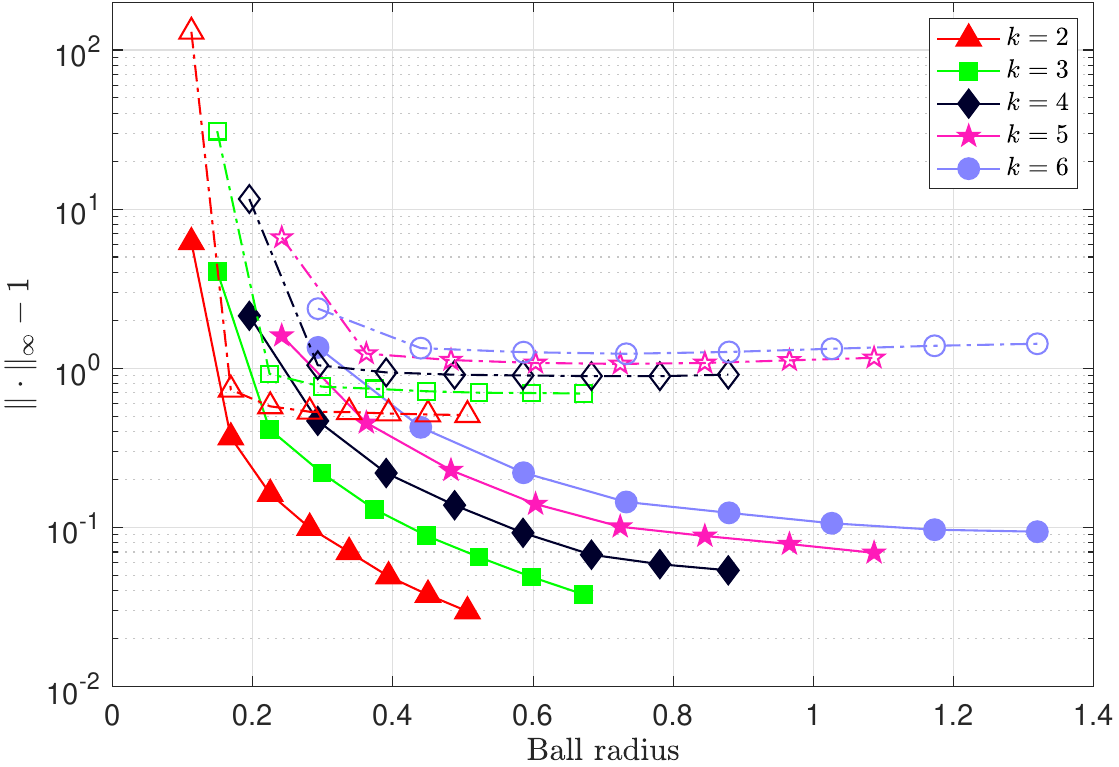} \\
(a) sphere & (b) torus \end{tabular}
\caption{Stability of the remapping  operators $\MLAD_{X}$ (filled markers) and $\MLS_X$ (open markers) for the (a) sphere and (b) torus nodes shown in \Cref{fig:nodesets}.  Result show estimates of the Lebesgue constants of the operators minus 1 (the optimal value) as the radii of support balls increases for different polynomial degrees of precision $k$ and a fixed $h_X$.  \label{fig:solid_body_lebesgue}}
\end{figure}
\begin{figure}[htb]
\centering
\begin{tabular}{cc}
\includegraphics[width=0.46\textwidth]{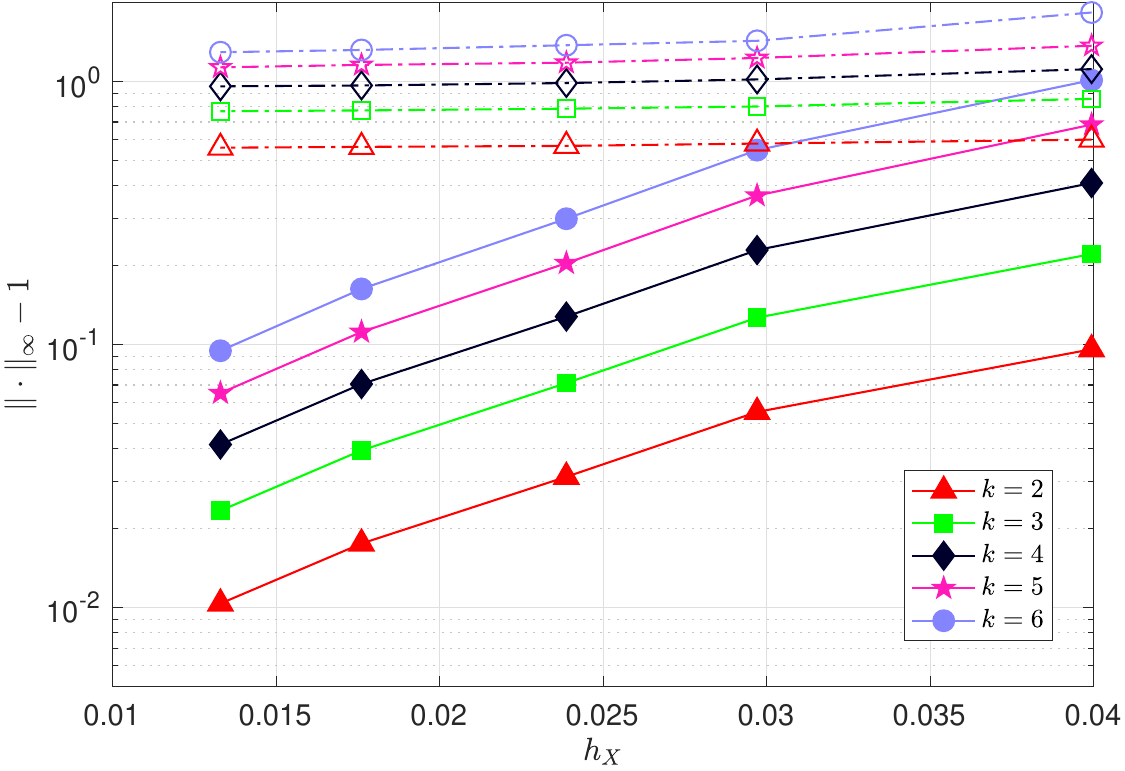} &
\includegraphics[width=0.46\textwidth]{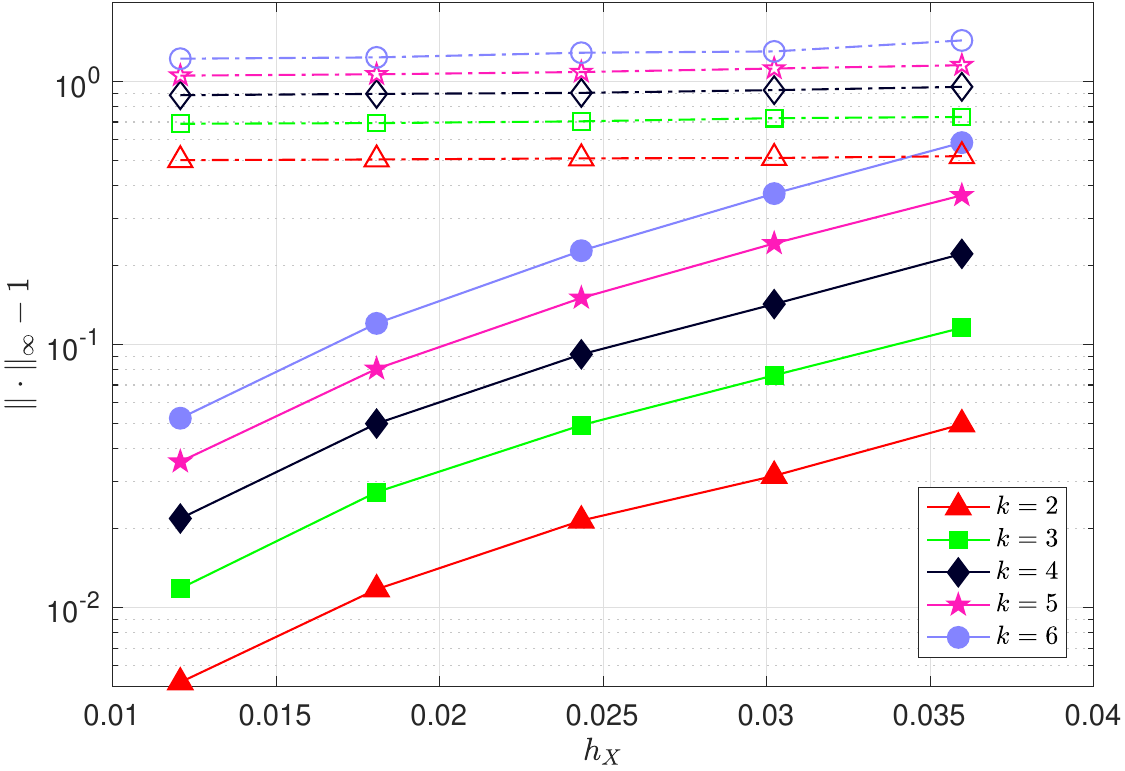} \\
(a) sphere & (b) torus
\end{tabular}
\caption{Stability of $\MLAD_{X}$ (filled markers) and $\MLS_X$ (open markers) for the (a) sphere and (b) torus with radii of the support balls fixed at 0.4.  Result show estimates of the Lebesgue constants of the operator minus 1 (the optimal value) for different polynomial degrees and point sets with decreasing mesh-norm. \label{fig:lebesgue_meshnorm}}
\end{figure}

To numerically study the stability of the approximation operators $\MLAD_{X}$ and $\MLS_X$, we estimate their Lebesgue constants for the two point sets shown in \Cref{fig:nodesets} using samples of the operators on
much denser sets of points on the two surfaces. \Cref{fig:solid_body_lebesgue} shows the results for the cases where the radii of the support balls, $B(\cdot,\Theta_k \frac{h_X}{\epsilon})$, for the operators increase. With $h_X$ and $\Theta_k$ fixed in these tests, increasing the radii is equivalent to decreasing $\epsilon$.  We see in \Cref{fig:nodesets}, that, as expected from \Cref{lem:lemma_assumption2}, $\|\MLAD_{X}\|_{\infty}$ converges towards the value of 1 for all the polynomial degrees tested as the radii increase.  However, for $\MLS_X$ the stability initially improves but then stagnates, indicating that it does not satisfy the first condition of \Cref{ass:A_approximator}.

\Cref{fig:lebesgue_meshnorm} shows a similar result, but where the radii of the support balls are held fixed and the mesh-norm $h_X$ is decreased by increasing the the density of the point sets used to construct $\MLAD_{X}$ and $\MLS_X$. This also produces a decrease in $\epsilon$, but through a different strategy.  We see from the results that $\|\MLAD_{X}\|_{\infty}$ again decreases as expected from \Cref{lem:lemma_assumption2}, but that $\|\MLS_X\|_{\infty}$ stagnates.  
%
%
%
\subsection{Convergence for advection on the torus}

\begin{figure}[htb]
\centering
\includegraphics[width=0.45\textwidth]{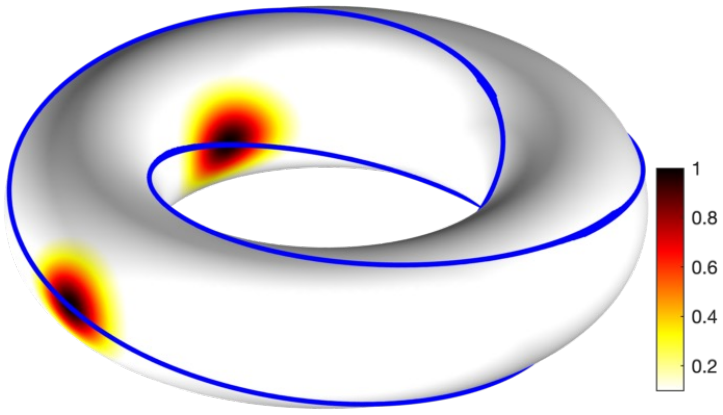}  
\caption{Set-up for the autonomous advection problems on the torus. Initial condition (Gaussian bells) shown as heat map with values corresponding to the displayed color bar. Solid blue lines mark the path of the $(3,2)$ Torus knot that the centers of the bells are transported along during the simulation.  The bells return to their initial positions after integrating to $t=2\pi$. \label{fig:autonomousflow}}
\end{figure}
We consider the advection problem first described in~\cite{shankar2020robust}, which transports the given initial condition along a torus knot~\cite[\S 5.1]{adams2004knot}.  The torus is given by the parameterization
\begin{align*}
    x(\lambda,\varphi) = (1 + R\cos(\varphi))\cos(\lambda),\; y(\lambda,\varphi) = (1 + R\cos(\varphi))\sin(\lambda),\; z(\lambda,\varphi) = R\sin(\varphi),
\end{align*}
where $-\pi \leq \lambda,\varphi \leq \pi$ and $R=1/3$ is the inner radius of the torus.  The $(p,q)$ knot is given by the parameterization
\begin{align*}
x(s) &= (1 + R\cos(q(s-s_0))\cos(ps) \\
y(s) &= (1 + R\cos(q(s-s_0))\sin(ps) \\
z(s) &= -R \sin(q(s-s_0)),
\end{align*}
where $-\pi \leq s \leq \pi$, and $s_0$ determines the initial position of the knot. For a given point in parametric space $(\lambda,\varphi)$ on the tours, the tangential velocity field that follows the $(p,q)$ knot is given with respect to the Cartesian basis as
\begin{align*}
\vec{a}(\lambda,\varphi) = 
\begin{bmatrix}
Rq\sin(\varphi) \cos(\lambda) - p (1 + R\cos(\varphi)) \sin(\lambda) \\
Rq\sin(\varphi)\sin(\lambda) + p(1 + R\cos(\varphi))\cos(\lambda) \\
-Rq\cos(\varphi)
\end{bmatrix},
\end{align*}
which follows from differentiating $x(s),y(s)$, and $z(s)$ and transforming the result to the parameter space of the torus.
As in~\cite{shankar2020robust}, we use $p=3$ and $q=2$.  

The following sum of two Gaussian bells is used as the initial condition:
\begin{align}
v(x,y,z,0) = e^{-20(r_1(x,y,z)))^2} + e^{-20(r_2(x,y,z))^2},
\end{align}
where $r_i(x,y,z)$ is the Euclidean distance from $(x,y,z)$ to $(x_i,y_i,z_i)$, and the centers of the bells are given as $(x_1,y_1,z_1)  = (-1-b,0,0)$ and $(x_2,y_2,z_2)  = (0,1-b,0)$.
See \Cref{fig:autonomousflow} for an illustration of the flow path and the initial condition along the $(3,2)$ knot. For the retraction operator, we use the exact closet point projection.

For the first set of advection experiments, we set the time-step as $h_t = \frac12 \sqrt{h_X}$ and the radii of the balls used to construct $\MLAD_{X}$ proportional to $\sqrt{h_X}$, which means $\epsilon\sim\sqrt{h_X}$.  According to the convergence theory developed in \Cref{sec:main_theorem}  and \Cref{sec:ell1_remap}, we expect the convergence of the SL scheme to be $h_X^{\ell/2} + h_X^{k/2}$, where the first term comes from the time-integration and the second from the remapping operator. \Cref{fig:torus_rk} shows the relative max-norm errors in the computed solution for $\ell,k=2\ldots,6$ as $h_X$ decreases.  The Figure clearly shows which terms in the error are dominant for different combinations of $\ell$ and $k$.  For $\ell=1$ and $2$, we see that the time-integration errors dominate, while for $\ell > 2$, the spatial errors begin to dominate for different $k$.  In all cases, the observed convergence rates are close to or exceed the theoretical estimates.
\begin{figure}[tbh]
\begin{tabular}{cc}
{\footnotesize$\ell=1$} & {\footnotesize$\ell = 2$}\\
\includegraphics[width=0.4\textwidth]{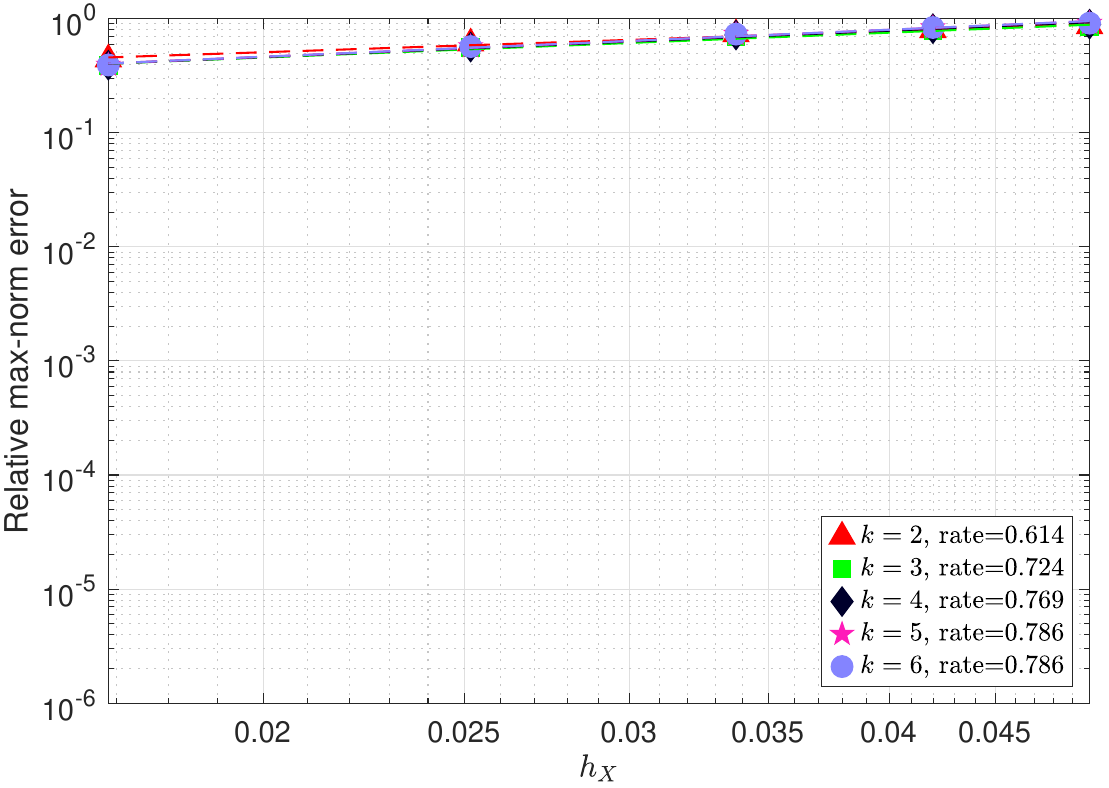} & \includegraphics[width=0.4\textwidth]{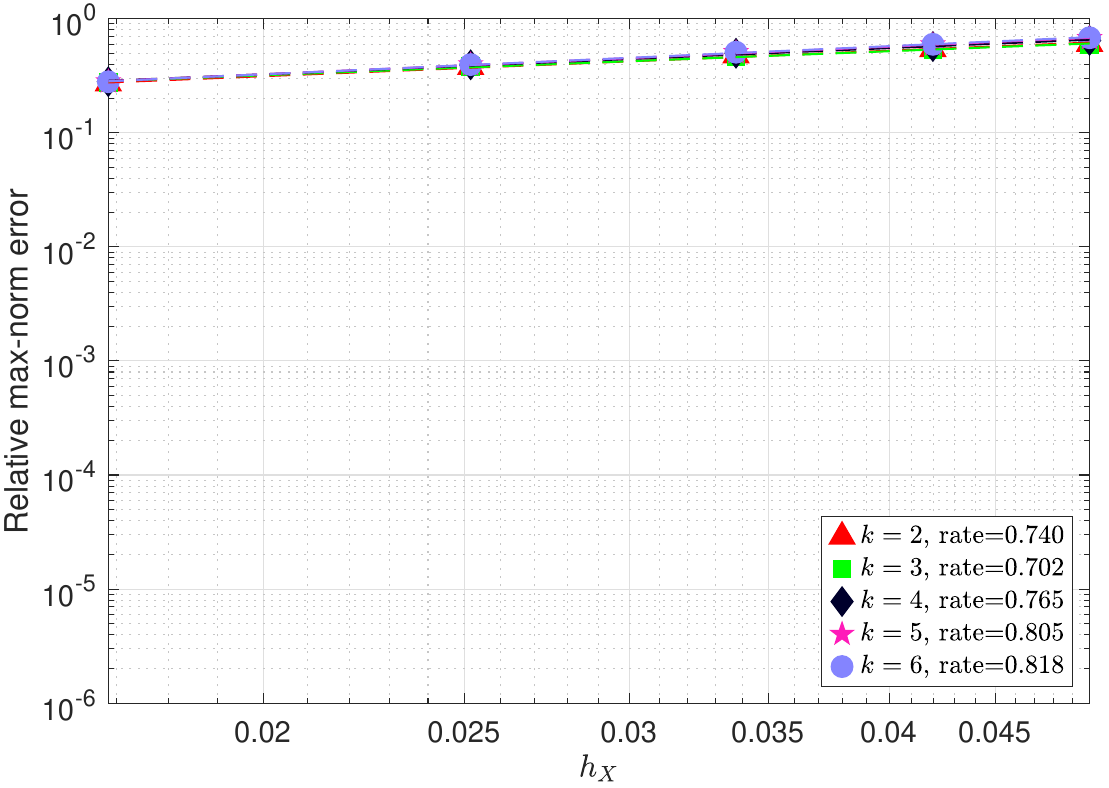} \\
{\footnotesize$\ell=3$} & {\footnotesize$\ell = 4$}\\
\includegraphics[width=0.4\textwidth]{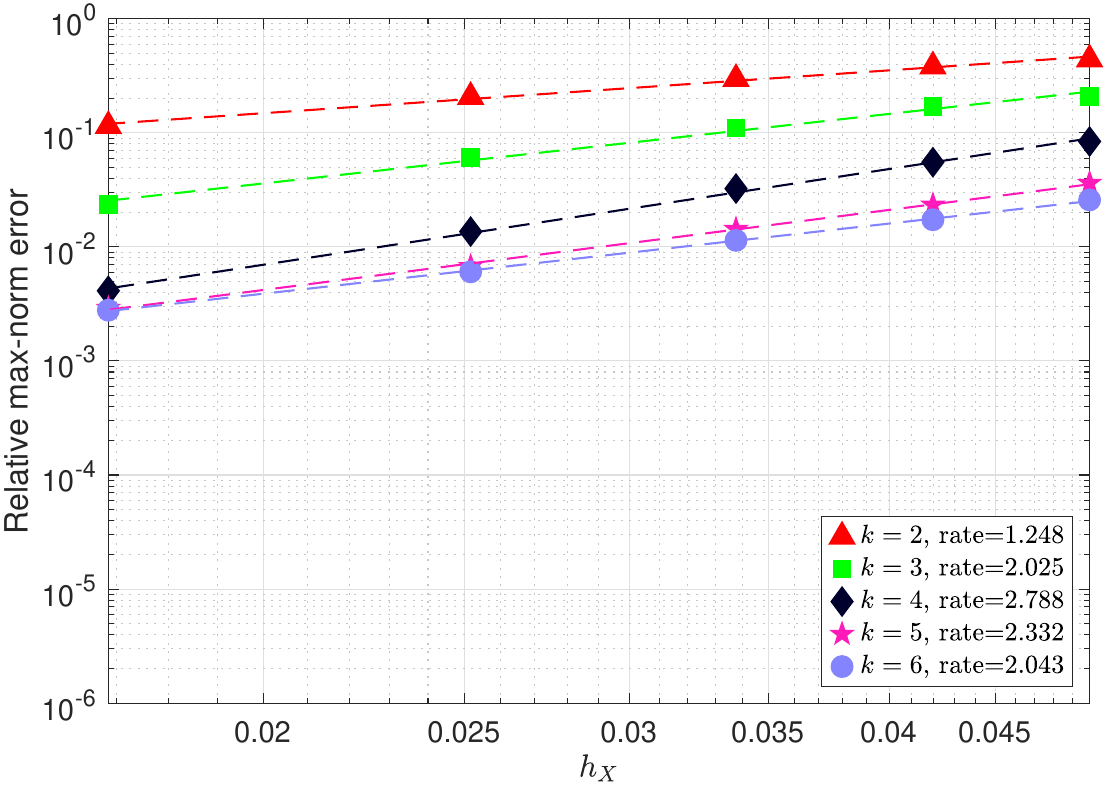} & \includegraphics[width=0.4\textwidth]{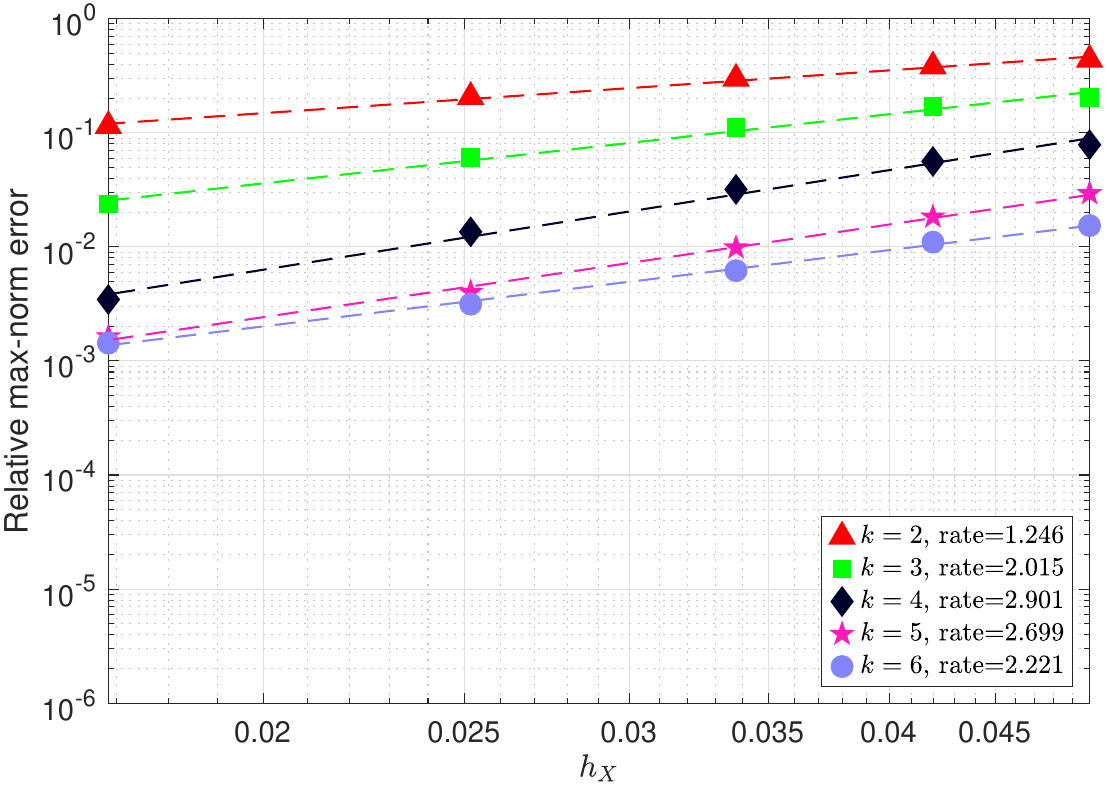} \\
{\footnotesize$\ell=5$} & {\footnotesize$\ell=6$}\\
\includegraphics[width=0.4\textwidth]{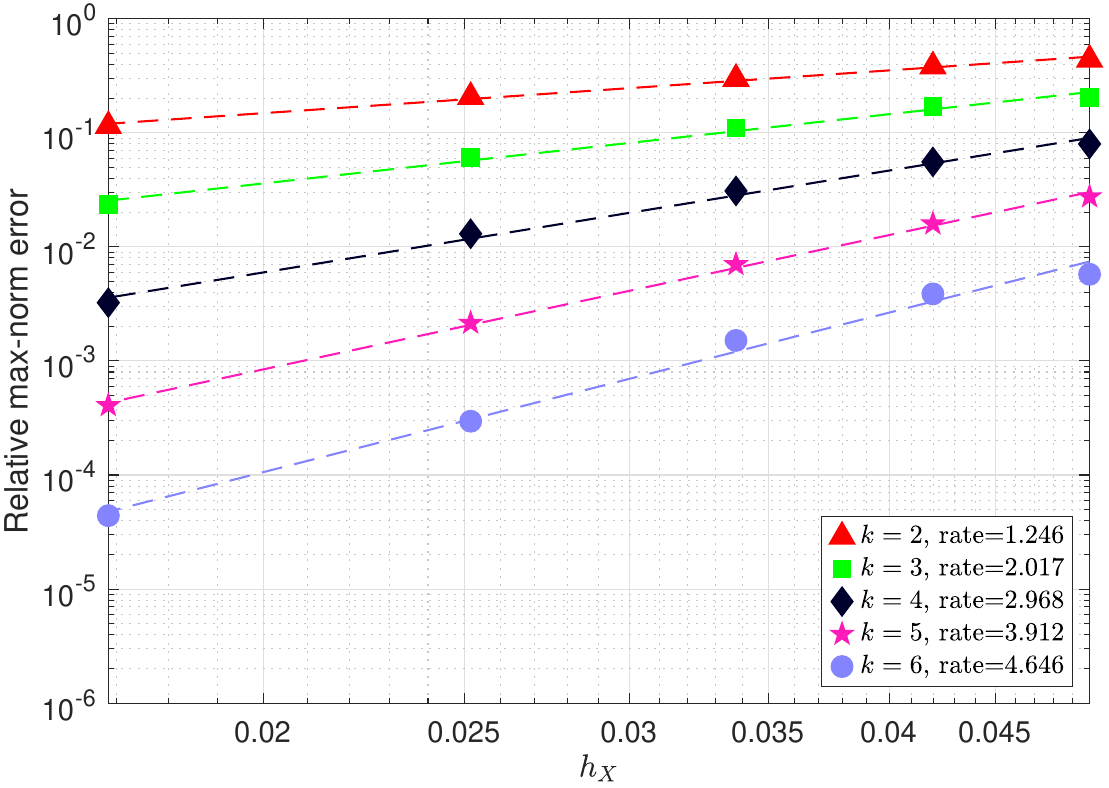} & \includegraphics[width=0.4\textwidth]{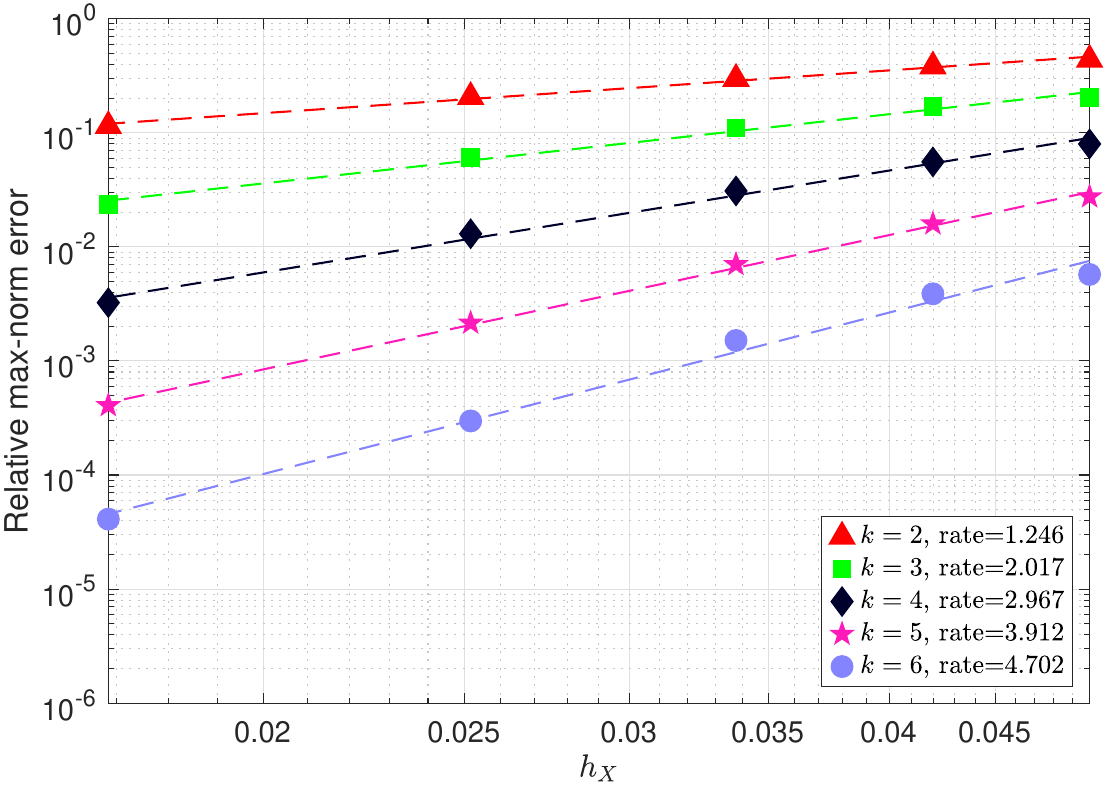}
\end{tabular}
\caption{Convergence results for the SL scheme applied to the torus advection problem for different choices of parameters: $\ell$=order of accuracy of the RK method, and $k$=the polynomial precision of $\MLAD_{X}$.  Dashed lines show lines of best fit according to the estimated convergence rates given in the legend.\label{fig:torus_rk}}
\end{figure}
In the next set of advection experiments, we select the parameters to ensure an overall theoretical convergence rate of $h_X^s$, as in \cref{eq:motivation}: $h_t \sim h_X^{1/\alpha}$, $\ell=\alpha s$, the radii of the balls for $\MLAD_{X}$ proportional to $h_t$, and $k=\lceil(\ell+2-\alpha)/(\alpha-1)\rceil$. \Cref{fig:torus_overall_convg} shows the results of the experiments for different $s$.  We see that the observed convergence rates are all higher than the expected rates.  The two results for $s=2$, also seem to indicate that it can be beneficial to use larger $k$ than $\ell$ to improve convergence, but at the cost of larger radii for the balls used to construct $\MLAD_{X}$.
\begin{figure}[htb]
\centering
\includegraphics[width=0.5\textwidth]{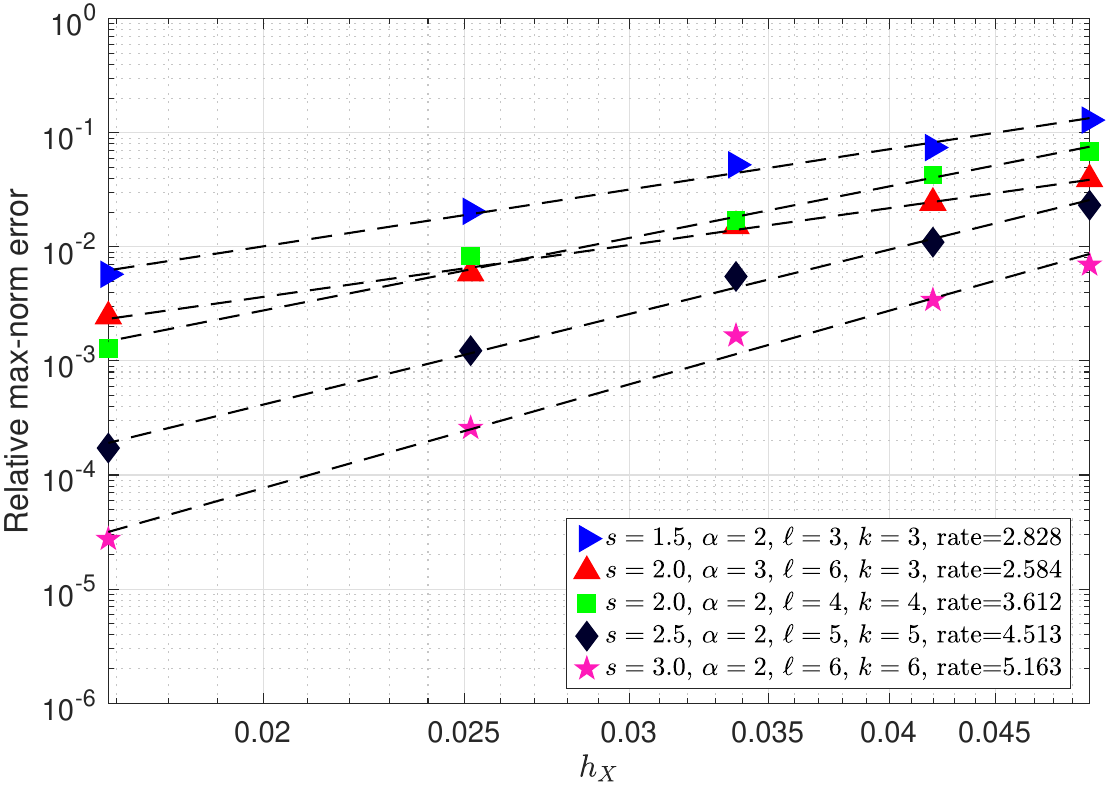}
\caption{Same as \Cref{fig:torus_rk}, but with the parameters selected to give an overall theoretical convergence rate of $h_X^s$.\label{fig:torus_overall_convg}}
\end{figure}

%
%
%
\subsection{Non-autonomous advection problem\label{sec:non_auto}}
In the final experiments, we consider a setup for the SL scheme that is more common in the literature but does not conform to the convergence and stability theory developed previously.  The goal of this example is to demonstrate that the stability bound of \Cref{ass:A_approximator} on the remapping operator may be overly restrictive in obtaining a convergent SL scheme.  

For the remapping operator $\MLAD_X$ with polynomial precision $k$ in the ambient space $\mathbb{R}^3$, we select the same radius for each ball used to construct the local approximant at the departure point.  We choose the this value so that every ball contains at least $2M$ points, where $M$ is the dimension of the polynomial space.  For quasi-uniform points considered here, this means that the radii of the balls are proportional to $h_X\sqrt{M}$, effectively fixing $\epsilon$ as a constant.  This also means that the asymptotic cost of solving for each local approximant remains fixed as $h_X$ is refined.  This method of choosing the radii is the same as what is proposed in~\cite{HRW_LPR} to construct the operator $\MLS_X$ \cref{eq:Pol_MLS}.
For the time-step $h_t$, we use the common practice of choosing it to satisfy a given CFL number $C_{\rm cfl}$. With this approach, $h_t = C_{\rm cfl} \frac{h_{X}}{u_{\rm max}}$, $u_{\rm max}$ is the maximum magnitude of the velocity field over the simulation time.
To test this new setup for the SL scheme, we use one of the standard advection problems for the sphere: the non-autonomous deformational flow test case from~\cite{NairLauritzen2010}.  The velocity field for this test is usually written with respect to the spherical coordinate basis, but can be easily converted to the Cartesian basis as 
\begin{align}
\vec{a}(\lambda,\theta,t) = 
\begin{bmatrix}
-\sin\lambda & -\cos\lambda\sin\theta \\
\cos\lambda & -\sin\lambda\sin\theta \\
0 & \cos\theta
\end{bmatrix}
\begin{bmatrix}
\frac{10}{T}\cos\lf(\frac{\pi t}{T}\rt)\sin^2\lf(\lambda - \frac{2\pi t}{T}\rt)\sin (2\theta) + \frac{2\pi}{T}\cos\theta \\
\frac{10}{T}\cos\lf(\frac{\pi t}{T}\rt)\sin\lf(2(\lambda - \frac{2\pi t}{5})\rt)\cos\theta
\end{bmatrix},
\end{align}
where $T=5$, $(\lambda,\theta)\in[-\pi,\pi)\times[-\pi/2,\pi/2]$.  This field transports and deforms the initial condition $v(\lambda, \theta, 0)$ until time $t = T/2$, then reverses direction, returning $v$ to its original position and value by $t = T$. \crcom{Thus}, the initial condition serves as the exact solution to compute the errors in the numerical results at $t=T$.  For the initial condition, we use the two Gaussian bells problem from~\cite{NairLauritzen2010} defined in Cartesian coordinates as
\begin{align}
q(x,y,z,0) = 0.95\lf( e^{-5(r_1(x,y,z)))^2} + e^{-5(r_2(x,y,z))^2} \rt),
\end{align}
where $r_i(x,y,z)$ is the Euclidean distance from $(x,y,z)$ to $(x_i,y_i,z_i)$, and the centers of the bells are given as $(x_1,y_1,z_1)  = (\sqrt{3}/2,1/2,0)$ and $(x_1,y_1,z_1)  = (\sqrt{3}/2,-1/2,0)$.  We use the estimate $u_{\max} = 2.91$ and set $C_{\rm cfl} = 4$ for this problem to determine the time-step as discussed above.  For the time-integrator, we use RK4. 
\begin{figure}[htb]
\centering
\begin{tabular}{cc}
\includegraphics[width=0.45\textwidth]{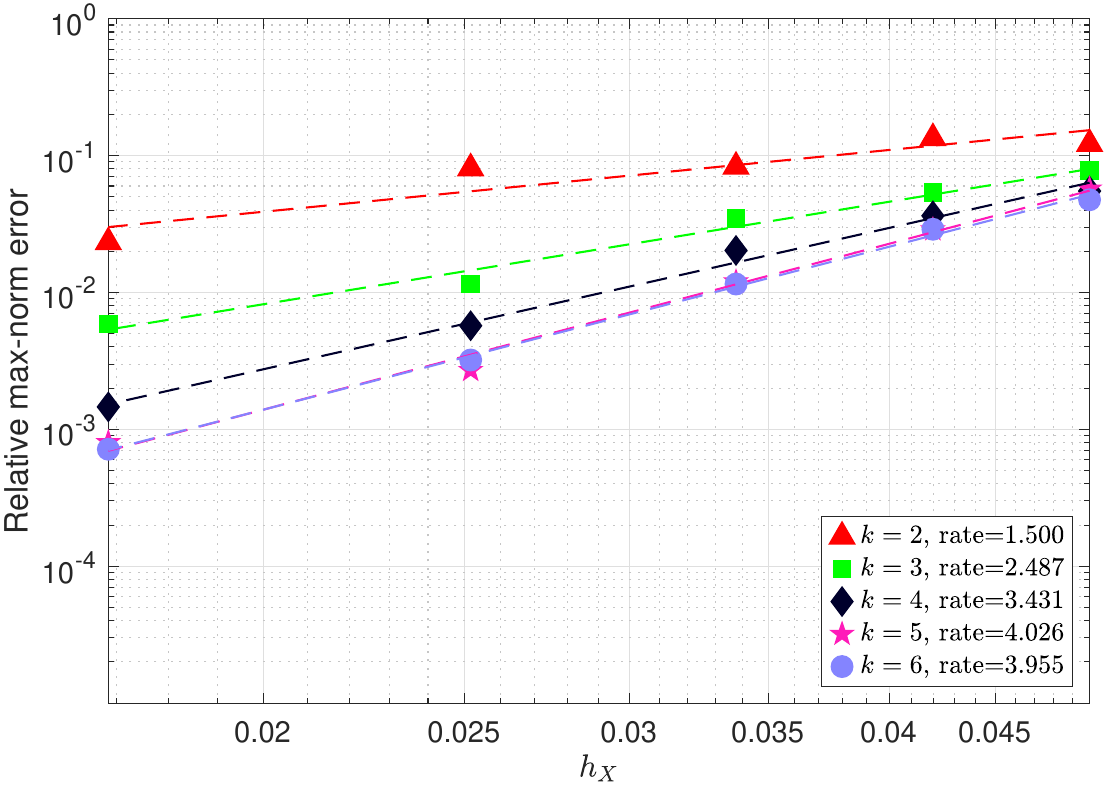} & \includegraphics[width=0.45\textwidth]{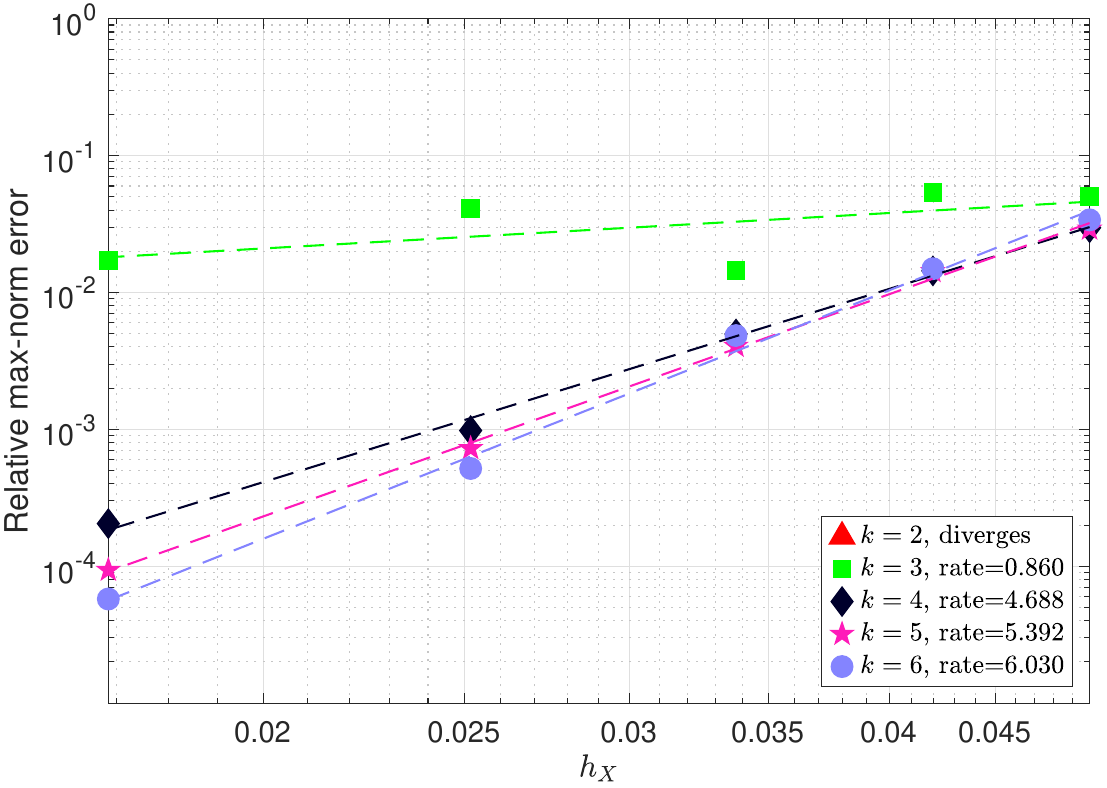}  \\
(a) $\MLAD_X$ & (b) $\MLS_X$
\end{tabular}
\caption{Convergence results for the deformational flow problem on the sphere after one rotation using spatial approximations based on remapping operators (a) $\MLAD_X$ and (b) $\MLS_X$. Time-integrations were done using RK4 with time-step corresponding to and approximate CFL number of 4.  Dashed lines show lines of best fit according to the estimated convergence rates given in the legend.\label{fig:deform_convg}}
\end{figure}
\Cref{fig:deform_convg} (a) shows the convergence of the SL scheme for $\MLAD_X$ with the above setup as $h_X$ is refined.  We see from the figure that the scheme appears to convergence and that the rate of convergence increases with $k$, until $k=6$, where it saturates around 4. The spatial approximation should have convergence of $\mathcal{O}(h_X^{k+1})$ and one would expect to lose an order of convergence in this term from the $1/h_t$ penalty that appears in the SL analysis.  However, it appears that the convergence is reduced by more than this.  The further reduced rate for $k=6$ could be due to the convergence of temporal errors, which we expect to be 4. We also show convergence results for the SL scheme using the $\MLS_X$ remapping operator in \Cref{fig:deform_convg} (b), where the radii of the balls for the local approximations are computed the same way as for $\MLAD_X$.  We see that for $k=2$, the scheme is unstable and diverges as $h_X$ is refined, while for $k=3$, the error is a bit erratic but does appear to slowly converge.  For $k=4,5,6$, the scheme appears to converge, with rates that one might expect according to the discussion above.  In these results, we do not yet observe the limiting fourth order convergence from the time integration. 
%
%
%
\section{Concluding remarks}
We have introduced a new remapping operator $T_X$ in \Cref{sec:ell1_remap} on manifolds that works simply on a point cloud and uses no intermediate geometric approximations, such as, for instance, approximated tangent planes. For theoretical considerations the points have to lie on an algebraic variety which also does not enter the algorithm explicitly. Given this remapping operator, we showed that one can relax the convergence properties of \cite{FalconeFerretti} to such an extent that it now covers the new remapping operator and provides theoretically arbitrary high convergence rates; see \cref{thm:mainerror}.  This is also reflected by the numerical experiments; see, for instance, \Cref{fig:torus_overall_convg}. From the stability theory of~\cite{FalconeFerretti} that has been further developed here, it can be anticipated that the SL schemes would become unstable, especially for larger $k$ using the above setup, since the Lebesgue constants of the remapping operators are larger, as observed in \Cref{sec:lebesgue}.  However, the last numerical example in \Cref{sec:non_auto} shows that this is not necessarily the case.  In fact, there we only observed divergence for $k=2$ in the $\MLS_X$ operator. This indicates that a more complete stability theory for semi-Lagrangian methods is needed.

\bibliographystyle{siam}
\bibliography{literature}

\end{document}